\newlength{\defbaselineskip }
\long\def\salta#1{\relax}
\theoremstyle{plain}
\newtheorem{theorem}{Theorem}[section]
\newtheorem{proposition}[theorem]{Proposition}
\newtheorem{lemma}[theorem]{Lemma}
\theoremstyle{definition}
\newtheorem{remark}[theorem]{Remark}
\newcommand{\re}{\mathbb{R}}
\def\into{\displaystyle \int_{\Omega}}
\def\sobr{W^{1,r}_{0}(\Omega)}
\numberwithin{equation}{section}
\title[Uniform $L^{\infty}$-boundedness for anisotropic quasilinear systems]{Uniform $L^{\infty}$-boundedness for solutions of anisotropic quasilinear systems}
\author[N. Borgia]{Natalino Borgia}
\address{Dipartimento di  Matematica  \\ Universit\`{a} degli Studi di Bari Aldo Moro \\ Via Orabona 4\\ 70125 Bari, Italy}
\email{natalino.borgia@uniba.it}
\author[S. Cingolani]{Silvia Cingolani}
\address{Dipartimento di  Matematica  \\ Universit\`{a} degli Studi di Bari Aldo Moro \\ Via Orabona 4\\ 70125 Bari, Italy}
\email{silvia.cingolani@uniba.it}
\author[G. Vannella]{Giuseppina Vannella}
\address{Dipartimento di Meccanica, Matematica e Management \\ Politecnico di Bari\\ Via Orabona 4\\ 70125 Bari, Italy}
\email{giuseppina.vannella@poliba.it}
\begin{document}

\begin{abstract}
In this paper we obtain uniformly locally $L^{\infty}$-estimate of solutions to non-autonomous quasilinear system involving operators in divergence form and a family of nonlinearities that are allowed to grow also critically.
\end{abstract}

\keywords{Quasilinear Elliptic Systems, Critical growth, Uniform  $L^{\infty}$- Boundedness of solutions, Generalized Stampacchia Lemma}

\subjclass[2000]{35J92, 35J50, 35B45,  35J60}

\maketitle

\section{Introduction}

\noindent
In this paper we are interested to study $L^{\infty}$-regularity of solutions of the following system

\begin{equation}\label{pq}
	\begin{cases}
		\begin{array}{ll}
			-\text{\rm div}  \; [ \nabla \Psi_1 \left( \nabla u  \right)] = H_s(\delta,x, u,v) & \hbox{in} \ \Omega,
			\medskip \\
			-\text{\rm div} \; [ \nabla \Psi_2 \left( \nabla v  \right)]= H_t(\delta,x,u,v) & \hbox{in} \ \Omega, \medskip\\
			u=v=0  & \hbox{on} \ \partial\Omega,
		\end{array}
	\end{cases}
\end{equation}

\noindent
where $\Omega$ is a smooth bounded domain of $\mathbb{R}^N$,  $ N \geq 2 $.\\
Denoting with $I$ an interval of $\mathbb{R}$, the  function $H:I\times \Omega \times \re^2 \to \mathbb{R}$  is such that $H(\delta,x,\cdot,\cdot) \in C^1(\re^2, \re)$ for any $\delta \in I$ and for a.e. $x\in \Omega$, $H_s(\delta, \cdot,s,t)$, $H_t(\delta, \cdot,s,t)$ are measurable for every $(\delta,s,t) \in I\times \re^2$, and $H(\delta,x,0,0)$ belongs to $ L^{1}(\Omega)$ for any $\delta \in I$.\\
Moreover, we assume:
\begin{itemize}
\item[$(\Psi)$] the functions $\Psi_{1}, \Psi_{2}: \mathbb{R}^N \to \mathbb{R}$ are of class $C^1$ with $\Psi_1(0)=0$, $\nabla \Psi_1(0)=0$  and $\Psi_2(0)=0$, $\nabla \Psi_2(0)=0$. Moreover, given $k \geq 0,$  $r >1$ and denoting with $\Psi_{r,k}: \mathbb{R}^N \to \mathbb{R}$ the function defined as
\begin{equation}\label{rArea}
	\displaystyle \Psi_{r,k}(\xi):= \frac{1}{r} \biggl[ \left( k^2 +
	\left| \xi \right|^2\right)^{ \frac{r}{2}} - k^r \biggr] ,
\end{equation}	
we assume that there exist $\alpha \geq 0$, $1<p<N$ and $ \frac{1}{p} < \nu_1 \leq C_1$ such that $ \displaystyle \left( \Psi_1 - \nu_1 \Psi_{p,\alpha}    \right)$  and  $ \displaystyle \left( C_1 \Psi_{p,\alpha} - \Psi_1   \right)$  are both convex, and there exist $\beta \geq 0$, $1<q<N$ and $ \frac{1}{q} < \nu_2 \leq C_2$ such that $ \displaystyle \left( \Psi_2 - \nu_2 \Psi_{q,\beta}    \right)$ and $ \displaystyle \left( C_2 \Psi_{q,\beta}   - \Psi_2 \right)$ are both convex.\\
	
\item[$(\mathcal{H})$]
	there exists $C_0>0$ such that
	\[|H_s(\delta,x,s,t)|\leq C_0 \left( 1 + |s|^{p^*-1} +|t|^{q^*\frac{p^*-1}{p^*}}   \right)
	\]
	\[|H_t(\delta,x,s,t)|\leq C_0 \left( 1 + |s|^{p^*\frac{q^*-1}{q^*}}+ |t|^{q^*-1}  \right),
	\]
	for a.e. $x\in \Omega$ and every $(\delta,s,t)\in I\times \re^2$, where $p^*:=Np/(N-p)$ and $q^*:=Nq/(N-q)$ are the critical Sobolev exponents of $p$ and $q$ respectively, where $p$ and $q$ are introduced in assumption $(\Psi)$.

\end{itemize}

Systems involving this kind of quasilinear operators 
model some phenomena in non-Newtonian mechanics, nonlinear elasticity and glaciology, combustion theory, population biology (see \cite{boccardodefiguerido,diazthelin, glow, manamaw,marcellini1}).

\medskip
\noindent
Let $X$ be the product space $W_0^{1,p}(\Omega)\times
W_0^{1,q}(\Omega)$  endowed with the norm
\[
\|z\|= \|u\|_{1,p} + \|v\|_{1,q},
\]
where $z=(u,v)\in X$.  Throughout this paper we shall denote respectively
by $\| \cdot\|_r$ and $\| \cdot \|_{1,r}$ the usual  norms in
$L^r(\Omega)$ and $W^{1,r}_0(\Omega)$.

Weak solutions of problem \eqref{pq} correspond to critical points
of the Euler functional $I_{\delta, \Psi_1, \Psi_2}:X\to \mathbb{R}$ defined as
\begin{align*}
	I_{\delta,\Psi_1,\Psi_2}(z) =I_{\delta, \Psi_1, \Psi_2}(u,v) =&  \into  \Psi_1(\nabla u) dx + \into \Psi_2(\nabla v ) dx  \\ &
	- \into
	H(\delta,x,u(x),v(x)) \,dx ,  
\end{align*}
for any $z=(u,v) \in X$.\\
Moreover, by $(\mathcal{H})$ and $(\Psi)$,  the functional $I_{\delta,\Psi_1, \Psi_2}$ is $C^1$ on $X$.

Condition $(\Psi)$ was introduced in \cite{CDV} where the authors obtained regularity results for the scalar case of system \eqref{pq}, with a nonlinearity independent of the variable $\delta$ that is allowed to grow critically. (See also \cite{dibenedetto1983, GV, lieberman1988,tolksdorf1983, tolksdorf1984} and \cite{CY,HO} for recent results). We refer the reader also to \cite{ACCFM,ACF} in which, letting $ B(t):= t^p/p \text{ for } t>0,$ the authors considered 
$$ \displaystyle  \Psi_1=B \circ H,$$
where the norm $H: \mathbb{R}^N \to \mathbb{R}$ is of class $C^2$ in $\mathbb{R}^N \setminus \{O \} $, and  its anisotropic unit ball is uniformly convex (see for instance \cite{CFV}).

Quasilinear elliptic systems were studied in \cite{MW}, where the authors obtained $L^{\infty}$-estimates results under general conditions on the data (see assumption $(\tilde{H})$ and Theorem 3.4. in \cite{MW}) that allow them to apply Moser’s iteration technique (see also \cite{CSS,DeNM,GPZ,MP} for existence results and \cite{BDMS,CM,DeFP,MINGIONE,MMSV} for regularity results).
Our aim is to prove $L^\infty$-estimates for system of the form \eqref{pq}, under assumptions $(\Psi)$ and $(\mathcal{H})$ that it turns out are quite general.

It is immediate to see that, for any $1<p<N,$ $ \alpha \geq 0$ and $1<q<N,$  $ \beta \geq 0$, the functions $\Psi_1 =\Psi_{p,\alpha}$ and $\Psi_2 =\Psi_{q,\beta}$ satisfy assumption $(\Psi)$  respectively (it is enough to take $\nu_1=C_1=1$ and $\nu_2=C_2=1$ respectively). Therefore, taking into account that 

\begin{equation}\label{rAreaGradiente}
\displaystyle  
\nabla \Psi_{r,k}(\xi):=
\begin{cases}
\left( k^2 +
	\left| \xi \right|^2\right)^{ \frac{r-2}{2}} \xi  & \text{ if } \xi \neq 0, \bigskip \\
0 & \text{ if } \xi=0,
\end{cases}
\end{equation}
the study of the system \eqref{pq} includes the study of system involving $p$-Laplacian and $q$-Laplacian operators or $p$-area type and $q$-area type operators.

In fact, if  we take $\Psi_1 =\Psi_{p,0}$ and $\Psi_2 =\Psi_{q,0}$, and moreover we take a function $H$ that does not depend on the variable $\delta$, system \eqref{pq} becomes

\begin{equation}\label{pqBoccardo}
	\begin{cases}
		\begin{array}{ll}
			- \Delta_p u = H_s(x, u,v) & \hbox{in} \ \Omega, \medskip
			\\
			- \Delta_q v = H_t(x,u,v) & \hbox{in} \ \Omega, \medskip\\
			u=v=0  & \hbox{on} \ \partial\Omega.
		\end{array}
	\end{cases}
\end{equation}

When $p=q=2$, a priori bounds for semilinear elliptic systems of the form \eqref{pqBoccardo}, or similar, can be found in \cite{CDM1,CDM2,DeY,QS}       with different methods and techniques.

Now, if we  consider $\Psi_1 =\Psi_{p,\alpha}$ and $\Psi_2 =\Psi_{q,\beta}$ with $\alpha, \beta \geq 0$, by \eqref{rAreaGradiente} problem \eqref{pq} becomes

\begin{equation}\label{pqcasospecifico}
	\begin{cases}
		\begin{array}{ll}
			-\text{\rm div} \left( (\alpha^2+|\nabla u|^2)^{\frac{p-2}{2}}\nabla u\right) = H_s(\delta,x, u,v) & \hbox{in} \ \Omega;
			 \medskip \\
			-\text{\rm div} \left( (\beta^2+|\nabla v|^{2})^{\frac{q-2}{2}}\nabla v\right)= H_t(\delta,x,u,v) & \hbox{in} \ \Omega; \medskip \\
			u=v=0  & \hbox{on} \ \partial\Omega.
		\end{array}
	\end{cases}
\end{equation}

Estimate of the $L^{\infty}$-norm of weak solutions of  \eqref{pqcasospecifico} is not straightforward, due to the coupling of $u$ with $v$ in $(\mathcal{H})$.
Results were obtained in \cite{CCMV}, under the conditions  $\alpha=\beta,$ $2\leq p,q < N$ and $H: \mathbb{R}^2 \to \mathbb{R}$ subcritical. 

Recently, in \cite{v}, the third author extended the estimates obtained in \cite{CCMV} to systems where the subcritical nonlinearity $H$ depends also on $\delta$,  so that we could use these new estimates in \cite{BCV}. 

In the present paper, taking into account assumptions $(\Psi)$ and $(\mathcal{H})$ we extend results obtained for system \eqref{pqcasospecifico}  in multiple directions. We consider $1<p,q<N$, two possibly different constants $\alpha$ and $\beta$, a nonlinearity $H$ that is allowed to grow critically, and moreover can explicitly depend on the independent variable $x$  and a parameter $\delta$, and finally we consider more general operators involving the functions $\Psi_1$ and $\Psi_2$ satisfying condition $(\Psi)$.

As already underlined, the extension to systems is not straightforward due to the coupling of $u$ with $v$ in $(\mathcal{H})$, and to overcome this difficulty we follow the same technique used in \cite{CCMV} and \cite{v}.

To extend also to the singular case, that is when $1 < p < 2$ or $1 < q < 2$, we cannot apply classical Stampacchia Lemma (see \cite[Lemma 4.1]{S}) but we can apply \cite[Lemma 2.1]{BDO} (see Section 2 for the statement) whose proof is based on the following generalization (see \cite[Lemma A.1]{BDO}) of Stampacchia Lemma

\begin{lemma} Let $\varphi: \mathbb{R}^+ \to \mathbb{R}^+$ be a non increasing function such that 

$$ \displaystyle \varphi(h) \leq \frac{c_0}{(h-k)^{\rho}}k^{\theta \rho} \, [\varphi(k)]^{1+ \lambda} \qquad \forall \, h>k \geq k_0,$$

\medskip
\noindent
where $c_0>0$, $k_0 \geq 0$, $\rho > 0$, $0 \leq \theta < 1$ and $\lambda>0$. Then there exists $k^*>0$ such that $\varphi(k^*)=0$.
\end{lemma}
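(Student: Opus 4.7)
The plan is to mimic the classical Stampacchia iteration with a geometric sequence of thresholds, and exploit the hypothesis $\theta<1$ to absorb the extra factor $k^{\theta\rho}$.

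First I set $M:=\varphi(k_0)$ and, for a parameter $d>0$ to be chosen at the end, I define the increasing sequence
\[
k_n := k_0 + d\,(1 - 2^{-n}), \qquad n\in\mathbb{N},
\]
so that $k_n\in[k_0,k_0+d]$ and $k_{n+1}-k_n = d\,2^{-n-1}$. Applying the hypothesis with $h=k_{n+1}$, $k=k_n$, and using the uniform upper bound $k_n\leq k_0+d$ on the factor $k^{\theta\rho}$, I obtain
\[
\varphi(k_{n+1}) \;\leq\; \frac{c_0\,2^{(n+1)\rho}}{d^{\rho}}\,(k_0+d)^{\theta\rho}\,[\varphi(k_n)]^{1+\lambda}.
\]

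Next I prove by induction on $n$ the decay estimate $\varphi(k_n)\leq M\,2^{-n\sigma}$ with the natural exponent $\sigma:=\rho/\lambda$. The case $n=0$ is trivial. Assuming the bound at level $n$, the recursion above gives
\[
\varphi(k_{n+1}) \;\leq\; \frac{c_0\,(k_0+d)^{\theta\rho}\,M^{\lambda}}{d^{\rho}}\cdot 2^{\rho(1+\lambda)/\lambda}\cdot M\cdot 2^{-(n+1)\sigma},
\]
after collecting the powers of $2$ (the exponent works out because $\sigma(1+\lambda)-\sigma=\rho$). Therefore the inductive step closes provided that $d$ satisfies
\[
d^{\rho}\;\geq\; c_0\,M^{\lambda}\,2^{\rho(1+\lambda)/\lambda}\,(k_0+d)^{\theta\rho}.
\]

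The key point, and the only genuine obstacle, is precisely this last inequality: it must be solvable in $d$, and it is exactly here that the hypothesis $\theta<1$ enters. Since the left-hand side grows like $d^{\rho}$ while the right-hand side grows like $d^{\theta\rho}$ as $d\to+\infty$, the ratio $(k_0+d)^{\theta\rho}/d^{\rho}\to 0$, so a sufficiently large $d_0>0$ can be chosen for which the inequality holds (if $\theta=0$ one recovers the explicit formula of the classical Stampacchia lemma). Fixing such $d_0$, the induction yields $\varphi(k_n)\leq M\,2^{-n\sigma}\to 0$. Setting $k^{*}:=k_0+d_0$ and using monotonicity together with $k_n\nearrow k^{*}$, we conclude $0\leq \varphi(k^{*})\leq \varphi(k_n)\to 0$, hence $\varphi(k^{*})=0$, as required.
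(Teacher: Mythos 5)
Your argument is correct. Note that the paper does not prove this lemma at all: it is quoted from \cite{BDO} (Lemma A.1 there), so there is no in-paper proof to compare with, and your write-up stands as a valid self-contained proof along the classical Stampacchia iteration. The key computations check out: with $k_n=k_0+d(1-2^{-n})$ the exponent bookkeeping in the induction $\varphi(k_n)\le M\,2^{-n\rho/\lambda}$ is exact (since $\sigma\lambda=\rho$), and the closing condition $d^{\rho}\ge c_0M^{\lambda}2^{\rho(1+\lambda)/\lambda}(k_0+d)^{\theta\rho}$ is solvable for $d$ large precisely because $\theta<1$, which is where the hypothesis enters; monotonicity then gives $\varphi(k_0+d_0)=0$ with $k_0+d_0>0$. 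The only trifling point is the degenerate case $k_0=0$, where $M=\varphi(k_0)$ may be undefined if $\mathbb{R}^+$ is taken to exclude $0$: just start the iteration from any $k_0'>k_0$, since the hypothesis holds a fortiori for $k\ge k_0'$.
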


Finally, since we are taking into account a nonlinearity $H$ that is allowed to grow critically, we will generalize the instrumental lemma \cite[Lemma 2.1]{v} (see Section 2 for the statement), provided we take a suitably small radius $r$:

\begin{lemma}\label{V0}
Let $s\in (1,N)$ and denote by $s^{*}$  the conjugate Sobolev exponent of~$s$.
For any $\varepsilon>0$ and $u_0\in W^{1,s}_0(\Omega)$,  there exist $\sigma>0$ and $r>0$ such that

\[\int_{\{|u(x)|\,\geq \sigma\}} \hspace{-1mm}|u(x)|^{s^*}\,  dx<\varepsilon \]
	
\medskip 
\noindent
for any $u\in B_r(u_0)=\{u\in  W^{1,s}_0(\Omega)\ :\ \|u-u_0\|_{1,s}\leq r \}$.
\end{lemma}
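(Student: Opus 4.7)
\smallskip
\noindent
\textbf{Proof proposal.} The plan is to reduce the problem to a standard absolute continuity argument for $|u_0|^{s^*}$, after controlling the difference $u-u_0$ by the Sobolev embedding $W^{1,s}_0(\Omega) \hookrightarrow L^{s^*}(\Omega)$. Let $C_S$ denote the embedding constant, so that $\|w\|_{s^*}\leq C_S\|w\|_{1,s}$ for every $w\in W^{1,s}_0(\Omega)$.

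First I would use the elementary inequality $|a+b|^{s^*}\leq 2^{s^*-1}(|a|^{s^*}+|b|^{s^*})$ to split
\[
\int_{\{|u|\geq \sigma\}}|u|^{s^*}\,dx \;\leq\; 2^{s^*-1}\int_{\Omega}|u-u_0|^{s^*}\,dx \;+\; 2^{s^*-1}\int_{\{|u|\geq\sigma\}}|u_0|^{s^*}\,dx,
\]
so that it suffices to make each of the two terms smaller than $\varepsilon/2$ by choosing $r$ and $\sigma$ appropriately. For the first term the Sobolev inequality gives
\[
\int_{\Omega}|u-u_0|^{s^*}\,dx \;\leq\; C_S^{s^*}\,\|u-u_0\|_{1,s}^{s^*} \;\leq\; C_S^{s^*}\,r^{s^*},
\]
so it is enough to pick $r>0$ small enough that $2^{s^*-1}C_S^{s^*}r^{s^*}<\varepsilon/2$.

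For the second term I would use absolute continuity of the Lebesgue integral: since $u_0\in W^{1,s}_0(\Omega)\hookrightarrow L^{s^*}(\Omega)$, there exists $\eta>0$ such that $\int_E|u_0|^{s^*}\,dx<\varepsilon/2^{s^*}$ for every measurable $E\subset\Omega$ with $|E|<\eta$. Now I have to check that the super-level set $\{|u|\geq \sigma\}$ has measure less than $\eta$, uniformly in the ball $B_r(u_0)$, provided $\sigma$ is chosen large. By Chebyshev and Sobolev,
\[
|\{|u|\geq\sigma\}| \;\leq\; \frac{\|u\|_{s^*}^{s^*}}{\sigma^{s^*}} \;\leq\; \frac{C_S^{s^*}(\|u_0\|_{1,s}+r)^{s^*}}{\sigma^{s^*}},
\]
and choosing $\sigma$ so that the right-hand side is smaller than $\eta$ gives the desired uniform control. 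Combining the two estimates yields the conclusion.

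The argument is essentially soft; the only point that requires a little care is recognising that although the set $\{|u|\geq\sigma\}$ depends on $u$ (and hence varies within $B_r(u_0)$), its Lebesgue measure is uniformly controlled by $\sigma$ via a Chebyshev-plus-Sobolev estimate, which is exactly what allows the absolute continuity of $\int|u_0|^{s^*}$ to be invoked. This is where the restriction to a bounded ball $B_r(u_0)$ is used, and it is also the reason why the statement gives existence of a suitably small radius $r>0$ rather than a single universal bound.
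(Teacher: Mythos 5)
Your proof is correct, and it takes a genuinely more streamlined route than the paper's. The paper argues by contradiction: assuming the conclusion fails, it extracts a sequence $u_n\in B_r(u_0)$ with $\int_{\{|u_n|\geq n\}}|u_n|^{s^*}\geq\varepsilon$, mentions (without really using it) a weakly convergent subsequence, bounds $|\{|u_n|\geq n\}|$ by Chebyshev applied to $\int|u_n|$, splits $|u_n|^{s^*}\leq 2^{s^*}(|u_n-u_0|^{s^*}+|u_0|^{s^*})$, controls the first piece by Sobolev and $r$, and the second by absolute continuity, and finally picks $r$ small to contradict the assumed lower bound $\varepsilon$. You dispense with the contradiction wrapper and the subsequence extraction entirely: you split the same way, dispose of $\int_\Omega|u-u_0|^{s^*}$ by Sobolev and a small $r$, and then uniformly bound $|\{|u|\geq\sigma\}|$ over the whole ball by a Chebyshev-plus-Sobolev estimate in $L^{s^*}$, so that absolute continuity of $\int|u_0|^{s^*}$ applies directly. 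The underlying ingredients (the $u_0$-decomposition, Sobolev embedding, Chebyshev, absolute continuity) coincide, but your direct phrasing is cleaner and makes the dependence of $r$ and $\sigma$ on $\varepsilon$, $u_0$ explicit, whereas the paper's contradiction argument obscures it and carries along a weak-compactness step that is never actually invoked. One very small point of care: in your last step, $\sigma$ is chosen after $r$, which is fine since the bound $C_S^{s^*}(\|u_0\|_{1,s}+r)^{s^*}/\sigma^{s^*}$ only improves as $\sigma$ grows; just make sure to state the order of quantifier choices ($r$ first, then $\eta$ from absolute continuity, then $\sigma$) explicitly so the uniformity over $B_r(u_0)$ is transparent.
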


\bigskip
\noindent
We can now state our main result: 
\begin{theorem}\label{key} 
If $(u,v)$ is a solution of $(\ref{pq})$ and $(\mathcal{H})$ and $(\Psi)$ hold, then $(u,v)\in \left(L^\infty(\Omega)\right)^2$.
 Moreover, for any fixed $(u_0,v_0)\in X$  there exists a  suitably small $R >0$ such that for any $\delta \in I$ and  denoting by 

	\[ 
	D_{R, \delta}(u_0,v_0)=\{
	(u,v)\in 
	X\ :\  \| (u,v)-(u_0,v_0)\|\leq R, \   I'_{\delta,\Psi_1,\Psi_2}(u,v)=0
	\},
	\]

\medskip
\noindent
we have

	\[ \|u\|_\infty,\, \|v\|_\infty\, \leq C
	\qquad  \forall \, (u,v)\in D_{R, \delta}(u_0,v_0),
	\]

\medskip	
\noindent
 where both the positive constants $R$ and $C$ depend on $u_0, v_0, C_0,\left| \Omega \right|, p,q, N,$ $ \alpha, \beta, \nu_1$ and $\nu_2$, but not on $\delta$.
		
\end{theorem}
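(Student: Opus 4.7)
The plan is to run a Stampacchia/De Giorgi iteration on both components of the system simultaneously. Concretely, for $k>0$ I would test the first equation of \eqref{pq} with $G_k(u):=(|u|-k)^+\,\mathrm{sign}(u)\in W^{1,p}_0(\Omega)$ and the second with $G_k(v)\in W^{1,q}_0(\Omega)$. The convexity of $\Psi_1-\nu_1\Psi_{p,\alpha}$ (respectively $\Psi_2-\nu_2\Psi_{q,\beta}$) together with the normalizations $\Psi_i(0)=0=\nabla\Psi_i(0)$ and $\nabla\Psi_{p,\alpha}(0)=0=\nabla\Psi_{q,\beta}(0)$ gives, via the subgradient inequality at the origin, the pointwise monotonicity
\[
\nabla\Psi_1(\nabla u)\cdot\nabla u \;\geq\; \nu_1\,(\alpha^2+|\nabla u|^2)^{\frac{p-2}{2}}|\nabla u|^2,
\]
and the analogous bound for $\Psi_2$, yielding the coercive left-hand sides of the two tested equations. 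When $p\geq 2$ this immediately dominates $\nu_1\|\nabla G_k(u)\|_p^p$; in the singular range $1<p<2$ the weight $(\alpha^2+|\nabla u|^2)^{(p-2)/2}$ survives on $A_k:=\{|u|>k\}$, which is exactly what forces the extra factor $k^{\theta\rho}$ in the Stampacchia inequality below and explains why the generalized lemma of \cite{BDO} is needed.

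\textbf{Right-hand side and absorption via Lemma \ref{V0}.} For the right-hand sides I would use $(\mathcal{H})$, Hölder on $A_k$ and $B_k:=\{|v|>k\}$, and the Sobolev inequality $\|G_k(u)\|_{p^*}\leq S_p\|\nabla G_k(u)\|_p$, reducing the critical contributions of the first tested equation to
\[
\Bigl(\int_{A_k}|u|^{p^*}\Bigr)^{\!\frac{p^*-1}{p^*}}\!\|\nabla G_k(u)\|_p \;+\; \Bigl(\int_{A_k}|v|^{q^*}\Bigr)^{\!\frac{p^*-1}{p^*}}\!\|\nabla G_k(u)\|_p,
\]
plus symmetric terms in the second equation. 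The first factor is made small by Lemma \ref{V0} applied to $u_0\in W^{1,p}_0(\Omega)$: given $\varepsilon>0$ one obtains $\sigma_1,r_1>0$ with $\int_{\{|u|\geq\sigma_1\}}|u|^{p^*}<\varepsilon$ for every $u\in B_{r_1}(u_0)$. The mixed factor is handled by splitting
\[
\int_{A_k}|v|^{q^*} \;\leq\; \sigma_2^{q^*}|A_k| \;+\; \int_{\{|v|\geq\sigma_2\}}|v|^{q^*},
\]
and invoking Lemma \ref{V0} applied to $v_0\in W^{1,q}_0(\Omega)$ to make the last summand smaller than $\varepsilon$ on a ball $B_{r_2}(v_0)$. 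Choosing $R:=\min(r_1,r_2)$ and starting the iteration at $k_0:=\max(\sigma_1,\sigma_2)$, for $\varepsilon$ small enough every critical term carrying $\|\nabla G_k(u)\|_p$ or $\|\nabla G_k(v)\|_q$ is absorbed into the coercive left-hand side by Young's inequality, leaving only Stampacchia-type remainders driven by $|A_k|$ and $|B_k|$.

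\textbf{Iteration, uniformity, and main obstacle.} Adding the two estimates and applying Sobolev once more, I would derive, for all $h>k\geq k_0$, a coupled Stampacchia inequality of the form
\[
|A_h|+|B_h|^{p/q} \;\leq\; \frac{c_0}{(h-k)^{\rho}}\,k^{\theta\rho}\bigl(|A_k|+|B_k|^{p/q}\bigr)^{1+\lambda},
\]
with $c_0,\rho,\theta,\lambda$ depending only on $N,p,q,\alpha,\beta,\nu_1,\nu_2,C_0,|\Omega|$ and on the chosen $\varepsilon$, and crucially not on $\delta$, since the constant $C_0$ in $(\mathcal{H})$ is $\delta$-independent and Lemma \ref{V0} depends only on $u_0,v_0$. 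The generalized Stampacchia lemma stated above then furnishes $k^\ast>0$ with $|A_{k^\ast}|+|B_{k^\ast}|^{p/q}=0$, hence $\|u\|_\infty,\|v\|_\infty\leq k^\ast$, uniformly over $D_{R,\delta}(u_0,v_0)$. The main obstacle is precisely the coupling of $u$ and $v$ through the critical cross-terms $|v|^{q^*(p^*-1)/p^*}$ and $|u|^{p^*(q^*-1)/q^*}$ in $(\mathcal{H})$: they cannot be absorbed by a small coefficient alone, and Lemma \ref{V0} is the device that replaces "small coefficient" by "small tails of the critical norms", at the price of working only on a small ball around $(u_0,v_0)$ in $X$. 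A secondary technical issue is the singular regime $\min(p,q)<2$, which is precisely why the generalized Stampacchia lemma with $\theta>0$ is used instead of the classical one with $\theta=0$.
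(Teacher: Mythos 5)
Your scheme is a one-pass De Giorgi--Stampacchia iteration with the truncations $G_k$, and at critical growth that iteration does not close; the gap is exactly in your absorption step. After your H\"older splitting the critical self-term reads $\int_{A_k}|u|^{p^*-1}|G_k(u)|\le \bigl(\int_{A_k}|u|^{p^*}\bigr)^{\frac{p^*-1}{p^*}}\|G_k(u)\|_{p^*}$: the coefficient is indeed small by Lemma \ref{V}, but it multiplies $\|\nabla G_k(u)\|_p$ to the \emph{first} power while the coercive side controls $\|\nabla G_k(u)\|_p^{p}$, so Young's inequality leaves a remainder which is a fixed constant (or a power of $\int_{A_k}|u|^{p^*}$), not a power of $|A_k|$ — knowing only $u\in L^{p^*}$ you cannot convert $\int_{A_k}|u|^{p^*}$ into $|A_k|^{1+\lambda}$-type quantities. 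An additive remainder that does not vanish as $|A_k|\to 0$ makes the recursion $\varphi(h)\le c_0(h-k)^{-\rho}k^{\theta\rho}\varphi(k)^{1+\lambda}$ unreachable, so neither the classical nor the generalized Stampacchia lemma applies and you cannot conclude $\varphi(k^*)=0$. If instead you split with the correct homogeneity, writing $|u|\le |G_k(u)|+k$ on $A_k$ so that the critical piece multiplies $\|G_k(u)\|_{p^*}^{p}$ (this piece \emph{is} absorbable via Lemma \ref{V}), the leftover term carries the factor $k^{p^*-1}$ and, after H\"older, Sobolev and Young, yields an effective exponent $\theta=(p^*-1)/(p-1)>1$, outside the admissible range $0\le\theta<1$ of the generalized lemma. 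Your attribution of the $k^{\theta\rho}$ factor to the singular range $1<p<2$ is also off: there the only extra contribution is the additive term $\alpha^{p}|A_k|$ coming from \eqref{disr}, and the paper ultimately applies Theorem \ref{Mammoliti} with $\theta=0$.

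What the paper does, and what your plan is missing, is a preliminary \emph{uniform gain of integrability}: Proposition \ref{integrability} runs a Moser-type bootstrap with the truncated-power test functions $\Phi_{k,\gamma p,\gamma}(\bar u)$, $\Phi_{\tilde k,\gamma q,\gamma}(\bar v)$, in which every critical or cross term multiplies $\bigl(\int_\Omega|h_{k,\gamma}(\bar u)|^{p^*}\bigr)^{p/p^*}$, i.e.\ a quantity with the same homogeneity as the coercive side, so the small tails provided by Lemma \ref{V} on a small ball $B_R(u_0,v_0)$ really do allow absorption; the coupling is then resolved by inserting the $\bar v$-inequality at the matched level $\tilde k=k^{p^*/q^*}$ into the $\bar u$-inequality, the cross coefficients $\|\bar v\|_{L^{q^*}(\Omega_{\sigma,\bar v})}$ and $\|\bar u\|_{L^{p^*}(\Omega_{\sigma,\bar u})}$ being made small again by Lemma \ref{V}. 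This gives $\int_\Omega|\bar u|^{\gamma p^*},\int_\Omega|\bar v|^{\gamma q^*}\le C$ for every $\gamma>1$, uniformly in $\delta$ and on the ball. Only then, fixing $m>N$, does $(\mathcal H)$ give $\|H_s(\delta,\cdot,\bar u,\bar v)\|_m\le C$ uniformly, and testing with $G_k(\bar u)$ produces $\int_{A_k}|\nabla\bar u|^p\le C\bigl(|A_k|^{p'(1/(p^*)'-1/m)}+|A_k|\bigr)$ with exponent strictly larger than $p/p^*$ and no $k$-dependence, so Theorem \ref{Mammoliti} applies (with $|A_k|<1$ for large $k$ supplied by Lemma \ref{V2023}). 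Your coercivity step via \eqref{contoconvessità} and your instinct that Lemma \ref{V} plus a small radius $R$ is the device taming both criticality and the coupling are correct, but without the two-step structure the iteration you propose cannot be closed.
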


So in the present paper we show carefully that, for any arbitrary $z_0 \in X$  there exists $R>0$  such that  the $\left(L^\infty(\Omega)\right)^2$-norm of any weak solution to (\ref{pq}) belonging to $B_R(z_0)$ is bounded by a constant $C$ that is independent of $\delta \in I $.

As a consequence of Theorem \ref{key}, we obtain $L^{\infty}$-boundedness of solutions also for system \eqref{pqBoccardo}. System of the form \eqref{pqBoccardo} include also different types of eigenvalue problem, such as

\begin{equation}\label{eigenZOGR}
	\begin{cases}
		\begin{array}{ll}
			- \Delta_p u  = \lambda |u|^{p-2} u +
			\frac{\lambda}{\gamma +1}|u|^{\beta} |v|^{\gamma} v, & \text{ in } \Omega,
			\medskip \\
			- \Delta_q v = \lambda |v|^{q-2} v +
			\frac{\lambda}{\beta +1}|u|^{\beta} |v|^{\gamma} u, & \text{ in } \Omega, \medskip \\
			u=v=0,  & \text{ on  }  \partial\Omega,
		\end{array}
	\end{cases}
\end{equation}

where $1 <p,q < N$ and $\beta, \gamma \geq 0$ satisfy $(\beta +1)/ p + (\gamma +1)/q =1$.\\
By our main result we deduce that if $(u,v)$ is an eigenfunction associated to the  eigenvalue $\bar{\lambda}$ of \eqref{eigenZOGR}, then both $ u \in L^\infty(\Omega)$ and $v \in L^{ \infty}(\Omega)$, and their norms in $L^\infty(\Omega)$ are bounded by a constant that depends on $\bar{\lambda}$.

\section{PRELIMINARY RESULTS}

\medskip
\noindent
The Euler functional associated to system \eqref{pq} is $I_{ \delta, \Psi_1, \Psi_2}: X \to \mathbb{R}$ defined as

\begin{align}\label{funzionale}
	I_{\delta,\Psi_1,\Psi_2}(z) =I_{ \delta, \Psi_1, \Psi_2}(u,v) =&  \into  \Psi_1(\nabla u) dx + \into \Psi_2(\nabla v ) dx  \\ &
	- \into
	H(\delta,x,u(x),v(x)) \,dx,   \nonumber
\end{align}
for any $z=(u,v) \in X$.

By assumption  $(\mathcal{H})$, we have that the part of \eqref{funzionale} involving the nonlinearity $H$ is of class $C^1$ on $X$.

To prove that assumption $(\Psi)$ implies that also the part of \eqref{funzionale} involving the anisotropic-type operators is of class $C^1$ on $X$, we need the following result.

\begin{lemma}\label{contogradconvess} Let  $\Psi: \mathbb{R}^N \to \mathbb{R}$ a convex $C^1$ function. Assume that there exists $c>0, d>0$ and $r>1$ such that

\begin{equation}\label{proprconvPsi}
\displaystyle \left| \Psi(\xi) \right| \leq c\left|\xi \right|^r + d  \qquad \forall \, \xi \in \mathbb{R}^N.
\end{equation}

\medskip
\noindent
Then there exists  a constant $\tilde{c}$ such that

\begin{equation}\label{proprconvgradPsi}
\displaystyle \left| \nabla \Psi(\xi) \right| \leq \tilde{c} \left( \left|\xi \right|^{r-1}  + 1 \right) \qquad \forall \, \xi \in \mathbb{R}^N.
\end{equation}
\end{lemma}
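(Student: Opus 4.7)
The approach is the standard convexity trick: use the supporting hyperplane inequality
\[
\Psi(\eta) \;\geq\; \Psi(\xi) + \langle \nabla \Psi(\xi), \eta - \xi \rangle, \qquad \forall\,\xi,\eta \in \mathbb{R}^N,
\]
which is valid since $\Psi$ is convex and $C^1$, and then choose $\eta$ as a translate of $\xi$ aligned with $\nabla \Psi(\xi)$ so as to extract the norm $|\nabla \Psi(\xi)|$ from the scalar product. The role of the growth hypothesis \eqref{proprconvPsi} is then to bound the right-hand side by a power of $|\xi|$.

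Fix $\xi \in \mathbb{R}^N$ (the bound \eqref{proprconvgradPsi} being trivial when $\nabla \Psi(\xi) = 0$) and set, for a parameter $t>0$ to be chosen,
\[
\eta := \xi + t\, \frac{\nabla \Psi(\xi)}{|\nabla \Psi(\xi)|},
\]
so that $\eta - \xi$ is parallel to $\nabla \Psi(\xi)$. The supporting inequality gives
\[
t\,|\nabla \Psi(\xi)| \;=\; \langle \nabla \Psi(\xi), \eta - \xi \rangle \;\leq\; \Psi(\eta) - \Psi(\xi) \;\leq\; |\Psi(\eta)| + |\Psi(\xi)|.
\]
Applying the growth bound \eqref{proprconvPsi}, together with $|\eta| \leq |\xi| + t$ and the elementary inequality $(a+b)^r \leq 2^{r-1}(a^r+b^r)$ (valid for $r>1$), one obtains
\[
t\,|\nabla \Psi(\xi)| \;\leq\; c\,2^{r-1}(|\xi|^r + t^r) + c\,|\xi|^r + 2d.
\]

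The last step is to choose $t$ so as to gain one order in $|\xi|$; this is the only delicate point. The natural choice is $t := 1 + |\xi|$, which, combined with $t^r \leq 2^{r-1}(1 + |\xi|^r)$, makes the right-hand side above bounded by $C(1 + |\xi|^r)$ for some constant $C = C(c,d,r)$. Dividing by $t = 1+|\xi|$ and using the trivial estimate $(1+|\xi|^r)/(1+|\xi|) \leq 1 + |\xi|^{r-1}$ (equivalent to $(1+|\xi|^{r-1})(1+|\xi|) \geq 1 + |\xi|^r$) then yields \eqref{proprconvgradPsi} with $\tilde{c}$ depending only on $c,d,r$.

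There is no real obstacle to this plan: the only pitfall is taking $t$ constant in $\xi$, which would yield a bound of order $|\xi|^r$ instead of $|\xi|^{r-1}$; the linear growth of $t$ in $|\xi|$ is precisely what recovers the sharper exponent demanded by \eqref{proprconvgradPsi}.
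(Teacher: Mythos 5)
Your proof is correct and follows essentially the same route as the paper: the supporting-hyperplane inequality for the convex $C^1$ function $\Psi$, tested at a point shifted from $\xi$ along $\nabla\Psi(\xi)$ by a distance of order $|\xi|$, combined with the growth bound \eqref{proprconvPsi}. The only (harmless) difference is your choice of step length $t=1+|\xi|$, which covers small $|\xi|$ directly, whereas the paper takes step length $|\xi|$, obtains a term of the form $\tilde d/|\xi|$, and handles the region near the origin by invoking that $\nabla\Psi$ is continuous, hence bounded on bounded sets.
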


\begin{proof} By convexity of $\Psi$, we know that

\begin{equation}\label{contoetah}
 \displaystyle \Psi(\xi + h) - \Psi(\xi) \geq \nabla \Psi(\xi) \cdot h \qquad \forall \, \xi, h \in \mathbb{R}^N.
\end{equation}

\medskip
\noindent
Let $\xi \in \mathbb{R}^N$ with $\xi \neq 0$. Taking into account \eqref{proprconvPsi} and \eqref{contoetah}, there exist $\bar{c}$ and $\tilde{d}$ such that
\begin{align*}
\displaystyle 
\left| \nabla \Psi(\xi) \right| = \sup_{ |v| \leq 1 } \; \nabla \Psi (\xi) \cdot v  = \sup_{ |v| \leq 1 } \; \frac{\nabla \Psi (\xi) \cdot \left|\xi \right| \, v}{\left| \xi \right|}
&  \leq \sup_{ |v| \leq 1 } \frac{\Psi(\xi + \left|\xi\right|  v) - \Psi(\xi)}{\left| \xi \right|} \\
&  \leq \sup_{ |v| \leq 1 } \frac{\left| \Psi(\xi + \left|\xi\right|  v) - \Psi(\xi) \right|}{\left| \xi \right|} \\
& \leq \bar{c} \left| \xi \right|^{r-1} + \frac{\tilde{d}}{\left| \xi \right|},
\end{align*}
\medskip
\noindent
by which, taking moreover into account that $\Psi$ is a function of class $C^1$ on $\mathbb{R}^N$, we deduce that there exists $\tilde{c}>0$ such that 
\begin{equation*}
\displaystyle \left| \nabla \Psi(\xi) \right| \leq \tilde{c} \left( \left|\xi \right|^{r-1}  + 1 \right) \qquad \forall \, \xi \in \mathbb{R}^N.
\end{equation*}
\end{proof}

\noindent
By Lemma  \ref{contogradconvess} e and by applying standard arguments, if $\Omega$ is a bounded domain of $\mathbb{R}^N$ and  if $\Psi: \mathbb{R}^N \to \mathbb{R}$ is a convex $C^1$ function satisfying \eqref{proprconvPsi},  then the functional $f: W_0^{1,r}(\Omega) \to \mathbb{R}$ defined as 

\begin{equation*}
 \displaystyle f(u):= \into \Psi \left( \nabla u(x) \right ) \, dx
\end{equation*}

is of class $C^1$ on $W_0^{1,r}(\Omega)$ and

\begin{equation*}
 \displaystyle \langle f'(u_0), u \rangle =   \into \nabla \Psi \left( \nabla u_0(x) \right) \cdot \nabla u(x) \, dx 
\end{equation*}

for any $u_0, u \in W_0^{1,r}(\Omega)$.

\medskip
\noindent
Let us now consider a function $\Psi_1: \mathbb{R}^N \to \mathbb{R}$ satisfying assumption $(\Psi)$, that is  $\Psi_{1}$ is of class $C^1$ with $\Psi_1(0)=0$, $\nabla \Psi_1(0)=0$,  and there exist $\alpha \geq 0$, $p>1$ and $ \frac{1}{p} < \nu_1 \leq C_1$ such that $ \displaystyle \left( \Psi_1 - \nu_1 \Psi_{p,\alpha}    \right)$  and  $ \displaystyle \left( C_1 \Psi_{p,\alpha} - \Psi_1   \right)$  are both convex.
Under this assumptions, it follows that $\Psi_1$ is strictly convex and moreover  

\begin{equation}\label{disuguaglianza1}
\displaystyle \nu_1 \Psi_{p,\alpha}( \xi) \leq \Psi_1(\xi) \leq C_1 \Psi_{p,\alpha}( \xi) \qquad \forall \, \xi \in \mathbb{R}^N,
\end{equation}

and

\begin{equation}\label{contoconvessità}
\displaystyle \nu_1( \alpha^2 + \left| \xi \right|^2   )^{\frac{p-2}{2}} |\xi|^2 \leq  \nabla \Psi_1 ( \xi ) \cdot \xi  \leq  C_1( \alpha^2 + \left| \xi \right|^2   )^{\frac{p-2}{2}} |\xi|^2 \qquad \forall \, \xi \in \mathbb{R}^N.
\end{equation}

\medskip
\noindent
First of all we observe that by \eqref{contoconvessità}, it follows that if $\Psi_1$ satisfies assumption $(\Psi)$ for some $1<p<N$, then this $p$ is unique.
Moreover, by \eqref{disuguaglianza1}, $\Psi_1$ satisfies  \eqref{proprconvPsi}.  Similar results hold for a function $\Psi_2:\mathbb{R}^N \to \mathbb{R}$ satisfying assumption $(\Psi)$ with suitable constant $\beta \geq 0$, $q>1$ and $ \frac{1}{q} < \nu_2 \leq C_2$.
Therefore, by $(\mathcal{H})$ and $(\Psi)$  the functional $I_{\delta, \Psi_1, \Psi_2 }$ is of class $C^1$ on $X$ and
\begin{align*}
	\langle I'_{\delta,\Psi_1,\Psi_2}(z_0), z \rangle = & \displaystyle  \into \nabla \Psi_1 \left( \nabla u_0 \right) \cdot \nabla u \, dx +\into
	\nabla \Psi_2 \left( \nabla v_0  \right) \cdot \nabla v \,dx \\
	&-\displaystyle\into   \bigl(H_s(\delta,x,u_0,v_0) u +  H_t(\delta,x,u_0,v_0) v\bigr)\, dx
\end{align*}

for any $z_0=(u_0,v_0)$, $z=(u,v) \in X$.

\bigskip
\noindent
Let us recall the following result (see \cite[Lemma 2.1]{v}).

\medskip

\begin{lemma}\label{V2023}
	Let $s\in (1,N)$ and denote by $s^{*}$ 
	the conjugate Sobolev exponent of~$s$, namely ${s^*=sN/(N-s)}$.
	If $r,\varepsilon>0$, $u_0\in W^{1,s}_0(\Omega)$ and $ \tilde{s} \in [1,s^*)$, there is $\sigma>0$ such that
	\[\int_{\{|u(x)|\,\geq \sigma\}} \hspace{-1mm}
	|u(x)|^{\tilde{s}}\,  dx<\varepsilon
	\]
\medskip 
for any $u\in B_r(u_0)=\{u\in  W^{1,s}_0(\Omega)\ :\ \|u-u_0\|_{1,s}\leq r \}$.

\end{lemma}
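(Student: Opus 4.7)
The plan is to use the continuous Sobolev embedding $W^{1,s}_0(\Omega)\hookrightarrow L^{s^*}(\Omega)$ to get a uniform $L^{s^*}$--bound on $B_r(u_0)$, and then to exploit the strict subcriticality $\tilde{s}<s^*$ via Chebyshev's and H\"older's inequalities.

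First I would note that by the Sobolev inequality there exists a constant $S=S(N,s,\Omega)>0$ such that $\|u\|_{s^*}\leq S\|u\|_{1,s}$ for every $u\in W^{1,s}_0(\Omega)$. Hence for any $u\in B_r(u_0)$,
\[
\|u\|_{s^*}\leq S\bigl(\|u_0\|_{1,s}+r\bigr)=:M,
\]
and this bound is independent of the specific $u$ chosen in the ball.

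Next, by Chebyshev's inequality at level $\sigma>0$,
\[
|\{|u|\geq\sigma\}|\leq \sigma^{-s^*}\int_\Omega|u|^{s^*}\,dx\leq M^{s^*}\sigma^{-s^*},
\]
which tends to $0$ as $\sigma\to\infty$ uniformly over $u\in B_r(u_0)$. Since $\tilde{s}<s^*$, applying H\"older's inequality with conjugate exponents $s^*/\tilde{s}$ and $s^*/(s^*-\tilde{s})$ yields
\[
\int_{\{|u|\geq\sigma\}}|u|^{\tilde{s}}\,dx\leq\|u\|_{s^*}^{\tilde{s}}\,|\{|u|\geq\sigma\}|^{1-\tilde{s}/s^*}\leq M^{\tilde{s}}\bigl(M^{s^*}\sigma^{-s^*}\bigr)^{1-\tilde{s}/s^*}.
\]
Since the exponent $1-\tilde{s}/s^*$ is strictly positive, it suffices to pick $\sigma$ large enough for the right-hand side to drop below $\varepsilon$, and this choice is uniform for $u\in B_r(u_0)$.

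I do not anticipate any substantial obstacle here: the argument rests only on Sobolev embedding combined with two elementary inequalities. The essential point is the strict inequality $\tilde{s}<s^*$, which is exactly what turns the Chebyshev measure estimate into the desired integral smallness through the H\"older interpolation. This same strategy breaks down in the borderline critical case $\tilde{s}=s^*$, which is precisely the reason why Lemma \ref{V0} has to be stated separately and requires the additional flexibility of choosing a sufficiently small radius $r$ (there the argument must rely on absolute continuity of $|u_0|^{s^*}$ rather than on a free exponent to spare).
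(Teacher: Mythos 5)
Your argument is correct and self-contained: the Sobolev embedding gives a uniform $L^{s^*}$-bound $M$ on the ball $B_r(u_0)$, Chebyshev converts this into a uniform smallness of $|\{|u|\geq\sigma\}|$ as $\sigma\to\infty$, and H\"older with exponents $s^*/\tilde{s}$ and $s^*/(s^*-\tilde{s})$ turns the measure decay into integral decay precisely because $1-\tilde{s}/s^*>0$; every step is uniform over the ball, so the choice of $\sigma$ is too. The paper itself does not reprove this lemma but simply cites [v, Lemma~2.1], so there is no in-paper argument to compare against; your proof is a standard and efficient route to the statement. Your closing remark is also accurate: at $\tilde{s}=s^*$ the H\"older exponent degenerates to zero, which is exactly why the paper's Lemma~\ref{V} (the critical version) needs a different mechanism --- there the proof proceeds by contradiction, extracting a weakly convergent sequence, using compact embedding, controlling $\int_{E_n}|u_n-u_0|^{s^*}$ by the Sobolev inequality and the small radius $r$, and controlling $\int_{E_n}|u_0|^{s^*}$ by absolute continuity of the Lebesgue integral, matching the obstruction you identified.
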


Since in $(\mathcal{H})$ we are assuming that the function $H$ could grow critically, we need to improve the previous lemma up to critical growth. We can do this, provided we take a suitably small radius $r$.

\medskip
\noindent

\begin{lemma}\label{V}
Let $s\in (1,N)$ and denote by $s^{*}$  the conjugate Sobolev exponent of~$s$.
For any $\varepsilon>0$ and $u_0\in W^{1,s}_0(\Omega)$,  there exist $\sigma>0$ and $r>0$ such that

\[\int_{\{|u(x)|\,\geq \sigma\}} \hspace{-1mm}|u(x)|^{s^*}\,  dx<\varepsilon \]
	
\medskip 
\noindent
for any $u\in B_r(u_0)=\{u\in  W^{1,s}_0(\Omega)\ :\ \|u-u_0\|_{1,s}\leq r \}$.
\end{lemma}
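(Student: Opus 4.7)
The plan is to decompose any $u$ in a small $W_0^{1,s}$-ball around $u_0$ as $u = u_0 + (u-u_0)$, use the continuity (not compactness) of the critical Sobolev embedding $W_0^{1,s}(\Omega)\hookrightarrow L^{s^*}(\Omega)$ to make $\|u-u_0\|_{s^*}$ small, and exploit the absolute continuity of the integral of the fixed $L^1$ function $|u_0|^{s^*}$ to absorb the contribution coming from $u_0$ itself.

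More concretely, from the elementary inequality $|u|^{s^*}\le 2^{s^*-1}(|u_0|^{s^*}+|u-u_0|^{s^*})$ I would split
\[
\int_{\{|u|\ge\sigma\}}|u|^{s^*}\,dx \;\le\; 2^{s^*-1}\!\!\int_{\{|u|\ge\sigma\}}\!\!|u_0|^{s^*}\,dx \;+\; 2^{s^*-1}C_S^{s^*}\,r^{s^*},
\]
where $C_S$ denotes the Sobolev constant; the last term is automatically small for $r$ small. For the first integral I would further use the inclusion $\{|u|\ge\sigma\}\subseteq\{|u_0|\ge\sigma/2\}\cup\{|u-u_0|\ge\sigma/2\}$. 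The contribution over $\{|u_0|\ge\sigma/2\}$ tends to $0$ as $\sigma\to\infty$ by dominated convergence (the integrand is bounded by $|u_0|^{s^*}\in L^1(\Omega)$ and converges pointwise to $0$), so $\sigma$ can be fixed depending only on $u_0$ and $\varepsilon$ to make this piece arbitrarily small. For the piece over $\{|u-u_0|\ge\sigma/2\}$, Chebyshev combined with the Sobolev embedding yields
\[
\bigl|\{|u-u_0|\ge\sigma/2\}\bigr|\;\le\;(2/\sigma)^{s^*}\,C_S^{s^*}\,r^{s^*},
\]
and the absolute continuity of the set function $A\mapsto\int_A|u_0|^{s^*}\,dx$ provides a threshold $\eta>0$ below which this integral is less than the target tolerance; choosing $r$ small enough (depending on $\sigma$, $u_0$ and $\varepsilon$) to push the above bound under $\eta$ closes the estimate. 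The order of choices is therefore: first $\varepsilon>0$ is given; then $\sigma$ is chosen depending on $u_0$ and $\varepsilon$; finally $r$ is chosen depending on $\sigma$, $u_0$ and $\varepsilon$, so that both $\sigma$ and $r$ are uniform in $u\in B_r(u_0)$.

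The main obstacle, compared with the subcritical Lemma \ref{V2023}, is that $W_0^{1,s}(\Omega)\hookrightarrow L^{s^*}(\Omega)$ is only continuous and not compact, so a Rellich--Kondrachov argument uniform over $W_0^{1,s}$-balls of arbitrary radius is not available here. The price paid is exactly the restriction to small $r$: continuity of the embedding is then enough to control $\|u-u_0\|_{s^*}$ in terms of $r$, and the remaining uniformity in the ball rests entirely on the absolute continuity of the integral of the single, fixed $L^1$ function $|u_0|^{s^*}$, which is where the dependence of $\sigma$ and $r$ on $u_0$ genuinely enters.
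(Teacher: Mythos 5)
Your proof is correct and rests on exactly the same two ingredients as the paper's argument: the bound $\int_{\Omega}|u-u_0|^{s^*}\,dx\le C\,r^{s^*}$ supplied by the (merely continuous) critical Sobolev embedding, and the absolute continuity of $A\mapsto\int_A|u_0|^{s^*}\,dx$ on sets of small measure, after the elementary splitting $|u|^{s^*}\le 2^{s^*}\bigl(|u-u_0|^{s^*}+|u_0|^{s^*}\bigr)$. The only real difference is organizational: the paper argues by contradiction, taking a sequence $u_n\in B_r(u_0)$ with levels $\sigma=n$ and showing $|E_n|=|\{|u_n|\ge n\}|\to 0$ via Chebyshev with the $L^1$-norm of $u_n$, whereas you argue directly, using the inclusion $\{|u|\ge\sigma\}\subseteq\{|u_0|\ge\sigma/2\}\cup\{|u-u_0|\ge\sigma/2\}$ together with Chebyshev in $L^{s^*}$ for the difference. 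Your version has the mild advantage of making the order and dependence of the choices explicit (first $\sigma=\sigma(u_0,\varepsilon)$ from the tail of $|u_0|^{s^*}$, then $r=r(\sigma,u_0,\varepsilon)$ from the Sobolev constant and the absolute-continuity threshold $\eta$), while the paper's contradiction argument is a bit shorter to write; both are complete, and all your steps check out.
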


\begin{proof}
By contradiction, assume that there are $\varepsilon>0$ and $u_0\in W^{1,s}_0(\Omega)$ such that, for any $\sigma>0$ and $r>0$ there exists $u_{\sigma,r} \in B_r(u_0)$ such that

$$
\displaystyle \int_{\{|u_{\sigma,r}(x)|\,\geq \sigma\}} \hspace{-1mm} |u_{\sigma,r}(x)|^{s^*}\,  dx \geq \varepsilon.
$$

\medskip
\noindent
Hence, we deduce that for any $r>0$ there exists a sequence $\{ u_n \}_n \subset B_r(u_0)$ such that

\begin{equation}\label{calcolo}
\displaystyle \qquad \qquad \quad \int_{\{|u_{n}(x)|\,\geq n\}} \hspace{-1mm} |u_{n}(x)|^{s^*}\,  dx \geq \varepsilon \qquad \forall \, n \in \mathbb{N}.
\end{equation}

\medskip
\noindent
Up to subsequence, there exists $\bar{u} \in W_0^{1,s}(\Omega)$ such that $\{u_n\}_n$ converges to $\bar{u}$ weakly in $W_0^{1,s}(\Omega)$ and strongly in $L^t(\Omega)$ for any $1 \leq t < s^*$.\\
Denoting with $E_n:= \{ x \in \Omega \, : \, \left| u_{n}(x) \right |\,\geq n \}$, by Chebyshev inequality we know that

$$ \displaystyle \left| E_n \right| \leq \frac{1}{n} \into |u_n| \, dx, $$

\noindent
by which we deduce that $\left| E_n \right| \to 0$ as $n \to + \infty$.\\

\medskip
\noindent
Observe now that

\begin{align}\label{calcolo2}
\displaystyle 
\int_{\{|u_n(x)|\,\geq n \}} \hspace{-1mm}|u_n(x)|^{s^*}\,  dx \nonumber
& = \int_{E_n} \hspace{-1mm}|u_n(x)- u_0(x) + u_0(x)|^{s^*}\,  dx \\
& \leq 2^{s^*} \int_{E_n} |u_n(x)- u_0(x)|^{s^*}\,  dx + 2^{s^*} \int_{E_n} |u_0(x)|^{s^*}\,  dx \nonumber\\
& \leq 2^{s^*} \into |u_n(x)- u_0(x)|^{s^*}\,  dx + 2^{s^*} \int_{E_n} |u_0(x)|^{s^*}\,  dx.
\end{align}

\medskip
\noindent
Since $\left| E_n \right| \to 0$ as $n \to + \infty$ and $u_0 \in L^{s^*}(\Omega)$, by absolute continuity of the Lebesgue integral we deduce 

$$ \displaystyle \int_{E_n} |u_0(x)|^{s^*}\,  dx \to 0 \qquad \text{as } n \to + \infty.$$
\medskip
\noindent
Taking into account Sobolev-Gagliardo-Nirenberg inequality, there exists a constant $C:=C(s,N)$ such that

$$ \displaystyle \into |u_n(x)- u_0(x)|^{s^*}\,  dx \leq C \|u_n-u_0\|_{1,s}^{s^*} \leq C r^{s^*} \qquad \forall n.$$

\medskip
\noindent
By choosing  $r= \frac{1}{2C^{\frac{1}{s^*}}} \bar{\varepsilon}^{\frac{1}{s^*}}$ with $0 < \bar{\varepsilon} < \varepsilon$ and passing to $ \displaystyle \limsup_{n \to + \infty}$ in \eqref{calcolo2}, we obtain 

$$ \displaystyle \limsup_{n \to + \infty} \int_{\{|u_n(x)|\,\geq n \}} \hspace{-1mm}|u_n(x)|^{s^*}\,  dx \leq \bar{\varepsilon},$$

\medskip
\noindent
that is a contradiction with \eqref{calcolo}. 
\end{proof}

\medskip
\noindent
To prove Theorem \ref{key}, we need to recall also the following result (see \cite[Lemma 2.1]{BDO}).

\begin{theorem}\label{Mammoliti}
Let $u$ a function in $ W_0^{1,p}(\Omega)$ such that, for $k$ greater then some $k_0$, 

\begin{equation}\label{inequalityBDO}
\displaystyle
\int_{A_k} \left| \nabla u  \right|^p \, dx \leq c k^{\theta p} \left|  A_k \right|^{\frac{p}{p^*} + \varepsilon},
\end{equation}

\medskip
\noindent
where $\varepsilon >0$, $0 \leq \theta < 1$, $p^*=\frac{Np}{N-p}$ and 
$$A_k= \left\{ x\in \Omega\ :\ \left|u (x) \right| > k\right\}.$$

\medskip
\noindent
Then, the norm of $u$ in $L^{\infty}(\Omega)$ is bounded by a constant that depends  on  $c,\theta,p, N, \varepsilon, k_0$ and $ \left| \Omega \right|$. 
\end{theorem}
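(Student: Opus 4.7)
The plan is to reduce the statement to the generalized Stampacchia-type lemma displayed earlier in the introduction (the unnumbered lemma stated just before Lemma~\ref{V0}), applied to the distribution function $\varphi(k):=|A_k|$; the bridge between the two is provided by the Sobolev embedding, which converts hypothesis~\eqref{inequalityBDO} into a recursive inequality between the measures of level sets of $u$.

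First I would consider, for each $k\ge k_0$, the truncation $v_k := (|u|-k)_+$, which belongs to $W^{1,p}_0(\Omega)$ and satisfies $|\nabla v_k| = |\nabla u|\,\chi_{A_k}$ almost everywhere. By the Sobolev--Gagliardo--Nirenberg inequality there exists $S=S(p,N)>0$ such that
\[
\left(\int_\Omega v_k^{p^*}\,dx\right)^{p/p^*}\le S\int_{A_k}|\nabla u|^p\,dx.
\]
For $h>k\ge k_0$ one has $A_h\subset A_k$ and $v_k\ge h-k$ on $A_h$, so combining the previous display with~\eqref{inequalityBDO} gives
\[
(h-k)^{p}\,|A_h|^{p/p^*}\ \le\ S\int_{A_k}|\nabla u|^p\,dx\ \le\ Sc\,k^{\theta p}\,|A_k|^{\frac{p}{p^*}+\varepsilon}.
\]
Raising to the power $p^*/p$, and setting $\lambda := \varepsilon\,p^*/p>0$ and $c_0:=(Sc)^{p^*/p}$, I obtain
\[
|A_h|\ \le\ \frac{c_0}{(h-k)^{p^*}}\,k^{\theta p^*}\,|A_k|^{1+\lambda}\qquad\forall\,h>k\ge k_0,
\]
which is exactly the recursive inequality hypothesized by the generalized Stampacchia Lemma with $\rho=p^*$, the constant $c_0$ just defined, the same $\theta\in[0,1)$, and the positive exponent $\lambda$.

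Applying that lemma to the non-increasing function $\varphi(k):=|A_k|$, which satisfies $\varphi(k_0)\le|\Omega|$, produces some $k^*>0$ with $\varphi(k^*)=0$, i.e.\ $|u(x)|\le k^*$ for almost every $x\in\Omega$. Tracking the standard iteration that proves the generalized Stampacchia Lemma shows that $k^*$ depends only on $c_0,\rho,\theta,\lambda,k_0$ and on the starting value $\varphi(k_0)\le|\Omega|$, hence ultimately on $c,\theta,p,N,\varepsilon,k_0$ and $|\Omega|$, as claimed. The whole argument is routine once the exponents are matched; the only point that requires care is that the $k^{\theta p}$ factor in~\eqref{inequalityBDO} would prevent a direct use of the classical Stampacchia Lemma, which is precisely why one must appeal to the generalization from~\cite{BDO}. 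Since $\theta$ is already assumed to lie in $[0,1)$ and $\varepsilon>0$ automatically gives $\lambda>0$, no further obstacle arises.
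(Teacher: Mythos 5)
Your proof is correct and follows exactly the route the paper intends: the paper states this result without proof, quoting it from \cite[Lemma 2.1]{BDO}, and its Introduction explicitly notes that the proof rests on the generalized Stampacchia lemma reproduced there, which is precisely what you do via the truncation $(|u|-k)_+$, the Sobolev inequality, and the resulting recursive level-set estimate with $\rho=p^*$ and $\lambda=\varepsilon p^*/p$. Your closing remark about tracking the iteration to get the quantitative dependence of $k^*$ on $c,\theta,p,N,\varepsilon,k_0,|\Omega|$ is the right way to reconcile the qualitative statement of the quoted lemma with the claimed bound.
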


\section{UNIFORMLY LOCALLY $L^{\infty}$-ESTIMATE}

\medskip
\noindent
Now, inspired by \cite{CCMV}, \cite{GV} and \cite{v},  we prove the main result. To this aim we need the following:

\begin{proposition}\label{integrability}
 For any fixed $(u_0,v_0)\in X$  there exists a  suitably small $R >0$ such that for any $\delta \in I$,  denoting by

	\[ 
	D_{R, \delta}(u_0,v_0)=\{
	(u,v)\in 
	X\ :\  \| (u,v)-(u_0,v_0)\|\leq R, \   I'_{\delta,\Psi_1,\Psi_2}(u,v)=0
	\},
	\]

\medskip
\noindent
and for any $\gamma>1$ we have

\begin{equation*}
	\into |\bar u|^{\gamma  p^*} \, dx \leq C \quad \text{and} \quad \into |\bar v|^{\gamma  q^*}\, dx  \leq  C  
	\qquad \qquad \forall (\bar u, \bar v) \in D_{R, \delta}(u_0,v_0),
\end{equation*}

\medskip	
\noindent
 where   both the positive constants $R$ and $C$ depend on $u_0, v_0, C_0,\left| \Omega \right|, p,q, N,$ $ \alpha, \beta, \nu_1$, $\nu_2$ and $\gamma$ but not on $\delta$.

\end{proposition}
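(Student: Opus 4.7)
The argument is a Moser-type iteration coupled between the two equations, with Lemma~\ref{V} providing the key new input that allows the critical nonlinearity to be absorbed. Fix $\gamma>1$ and $(u_0,v_0)\in X$. I first apply Lemma~\ref{V} to $u_0\in W_0^{1,p}(\Omega)$ with $s=p$ and to $v_0\in W_0^{1,q}(\Omega)$ with $s=q$, choosing the associated $\varepsilon_1,\varepsilon_2$ so small that $\varepsilon_1^{(p^*-p)/p^*}$ and $\varepsilon_2^{(q^*-q)/q^*}$ (and related fractional powers encountered below) are small relative to $\nu_1,\nu_2$ and the Sobolev constants. This produces thresholds $\sigma_1,\sigma_2>0$ and a single radius $R>0$ such that every $(\bar u,\bar v)\in D_{R,\delta}(u_0,v_0)$ satisfies $\int_{\{|\bar u|\ge\sigma_1\}}|\bar u|^{p^*}\,dx<\varepsilon_1$ and $\int_{\{|\bar v|\ge\sigma_2\}}|\bar v|^{q^*}\,dx<\varepsilon_2$, uniformly in $\delta\in I$.

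For an intermediate exponent $k\geq 1$ and a truncation level $M>0$, I insert the admissible test functions $\varphi=\bar u\,|u_M|^{(k-1)p}$ and $\psi=\bar v\,|v_M|^{(k-1)q}$, with $u_M,v_M$ denoting the standard truncations at level $M$, into the Euler--Lagrange identity $I'_{\delta,\Psi_1,\Psi_2}(\bar u,\bar v)=0$. Using the coercivity estimate $(\ref{contoconvessità})$ on the left-hand side, then applying the Sobolev inequality to $\bar u\,|u_M|^{k-1}$ and $\bar v\,|v_M|^{k-1}$ and letting $M\to\infty$ via monotone convergence, one arrives at a master inequality in which the coercive combination $c_k\bigl[(\int|\bar u|^{kp^*})^{p/p^*}+(\int|\bar v|^{kq^*})^{q/q^*}\bigr]$ is bounded above by $C_0\int_\Omega(1+|\bar u|^{p^*-1}+|\bar v|^{q^*(p^*-1)/p^*})|\bar u|^{(k-1)p+1}\,dx$ plus the symmetric contribution from the $v$-equation, with $c_k>0$ depending on $k,p,q,\nu_1,\nu_2$. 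The cases $1<p<2$ or $1<q<2$ require a standard variant preserving $\alpha^2+|\nabla\bar u|^2$ (resp.\ $\beta^2+|\nabla\bar v|^2$) in the coercivity step; the subsequent estimates are otherwise unchanged.

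On the right-hand side, the purely lower-order and self-critical contributions are handled as in the scalar case of \cite{CDV}: splitting the self-critical integral $\int|\bar u|^{p^*-p}|\bar u|^{kp}\,dx$ across $\{|\bar u|\lessgtr\sigma_1\}$ and applying Hölder with exponents $(p^*/(p^*-p),p^*/p)$ jointly with Lemma~\ref{V} produces a factor $\varepsilon_1^{(p^*-p)/p^*}(\int|\bar u|^{kp^*})^{p/p^*}$ which is absorbed into the LHS by the smallness of $\varepsilon_1$. The genuinely new step is the cross-critical term $\int|\bar v|^{q^*(p^*-1)/p^*}|\bar u|^{(k-1)p+1}\,dx$, for which a plain two-term Hölder leaves an exponent deficit. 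I therefore split the integration region by both $\{|\bar u|>\sigma_1\}$ and $\{|\bar v|>\sigma_2\}$, and on the doubly-large region I apply a three-term Hölder whose exponents are tuned so that one factor equals $\bigl(\int_{\{|\bar v|\ge\sigma_2\}}|\bar v|^{q^*}\bigr)^\theta<\varepsilon_2^\theta$ for a well-chosen $\theta\in(0,(p^*-p)/p^*]$ while the remaining factors are powers of $\int|\bar u|^{kp^*}$ and $\int|\bar v|^{kq^*}$; the $\varepsilon_2^\theta$ slack makes the remaining Hölder exponents sum to $1$, and Young's inequality then absorbs both factors into the two LHS quantities. The symmetric cross term in the $v$-equation is treated identically after swapping the roles of $(u,\sigma_1)$ and $(v,\sigma_2)$.

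The base case $k=1$ is immediate from the Sobolev embeddings $W_0^{1,p}\hookrightarrow L^{p^*}$, $W_0^{1,q}\hookrightarrow L^{q^*}$ and from the norm bound $\|\bar u\|_{1,p}+\|\bar v\|_{1,q}\leq R+\|(u_0,v_0)\|$. Each inductive step raises the attainable integrability exponent by a factor $p^*/p>1$ for $\bar u$ and $q^*/q>1$ for $\bar v$, so finitely many iterations (in a number depending only on $\gamma,p,q,N$) reach the prescribed exponents $\gamma p^*$ and $\gamma q^*$; the constants $c_k$ and the resulting $C$ accumulate but remain finite and, crucially, independent of $\delta$. The main obstacle is precisely the coupled absorption of the two cross-critical terms: since $(\int|\bar u|^{kp^*})^{p/p^*}$ and $(\int|\bar v|^{kq^*})^{q/q^*}$ are not directly comparable when $p\neq q$, the three-term Hölder must be tuned symmetrically in both equations, and the slack furnished by Lemma~\ref{V} is the essential ingredient that distinguishes this coupled critical estimate from the subcritical setting of \cite{v}.
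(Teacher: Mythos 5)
Your framework --- test with power-type truncations, use the coercivity coming from $(\Psi)$ together with Sobolev, and invoke Lemma~\ref{V} to shrink the self-critical term enough to absorb it --- is the right skeleton and matches the paper in spirit. The argument, however, does not close at the very step you flag as the new one: the simultaneous absorption of the two cross-critical terms via a three-term H\"older plus Young. Take the cross term produced by the $u$-equation, write $A=\int_\Omega|\bar u|^{kp^*}$ and $B=\int_\Omega|\bar v|^{kq^*}$, and follow your prescription: the three H\"older factors are $B^{1/r_1}$, $A^{1/r_2}$ and $\bigl(\int_{\{|\bar v|\ge\sigma_2\}}|\bar v|^{q^*}\bigr)^{\theta}$ with $1/r_1+1/r_2+\theta=1$. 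Matching the exponents so that the first two factors are exactly integrals of $|\bar v|^{kq^*}$ and $|\bar u|^{kp^*}$ forces $\theta=(p^*-p)/p^*$, $1/r_1=(p-1)/(kp^*)$ and $1/r_2=(kp-p+1)/(kp^*)$; there is no free parameter $\theta$ to tune. For Young's inequality to dump $B^{1/r_1}A^{1/r_2}$ into the coercive combination $A^{p/p^*}+B^{q/q^*}$ one needs $\tfrac{q^*}{q}\tfrac{1}{r_1}+\tfrac{p^*}{p}\tfrac{1}{r_2}\le 1$, and a direct computation gives
\begin{equation*}
\frac{q^*}{q}\,\frac{1}{r_1}+\frac{p^*}{p}\,\frac{1}{r_2}
=1+\frac{p-1}{k}\cdot\frac{N(q-p)}{p^*(N-q)(N-p)},
\end{equation*}
which exceeds $1$ whenever $q>p$. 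The symmetric cross term produced by the $v$-equation requires $p<q$. The two conditions are incompatible unless $p=q$, so the proposed Young absorption cannot work in the regime $p\neq q$ that the proposition is designed to cover. The ``slack'' from Lemma~\ref{V} only makes the prefactor small; it does not repair a Young inequality whose exponents sum to more than one.

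The paper circumvents this by a different decoupling. For the cross integral $\int_\Omega|\bar v|^{q^*(p^*-1)/p^*}|\Phi_{k,\gamma p,\gamma}(\bar u)|$ it splits the domain by comparing $|\bar v|^{q^*/p^*}$ with $|\bar u|$ (not by a three-term H\"older), obtaining two contributions, one proportional to $\bigl(\int|h_{k,\gamma}(\bar u)|^{p^*}\bigr)^{p/p^*}$ and one proportional to $\bigl(\int|h_{k^{p^*/q^*},\gamma}(\bar v)|^{q^*}\bigr)^{p/p^*}$ --- note both carry the exponent $p/p^*$, not $q/q^*$, which is what defuses the mismatch. It then derives the analogous inequality from the $v$-equation, substitutes $\tilde k=k^{p^*/q^*}$, and feeds one estimate into the other: this is a sequential coupling, not a simultaneous Young absorption, and the incompatibility between $p^*/p$ and $q^*/q$ never arises. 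The paper also does not iterate: the exponent $\gamma$ is fixed from the start, $k$ in $h_{k,\gamma}$ is purely a truncation level, and the bound is extracted in one pass via Fatou, whereas your finite Moser iteration also has to reconcile that $u$ and $v$ gain integrability at the unequal rates $p^*/p$ and $q^*/q$. To repair your draft, replace the three-term H\"older by the paper's comparison split $\{|\bar v|^{q^*/p^*}\lessgtr|\bar u|\}$ and by the $\tilde k=k^{p^*/q^*}$ substitution.
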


\begin{proof} For simplicity, we will omit $dx$.
For every $\gamma,\, t,\, k>1$ we define
\begin{align*}
	h_{k,\gamma}(s)=&
	\begin{cases}
		s|s|^{\gamma-1} & \hbox{if }|s|\leq k,
		\\
		\gamma k^{\gamma -1} s + \text{sign}(s)(1-\gamma) k^{\gamma} & \hbox{if } |s|>k,
	\end{cases}
	\\
	\\
	\Phi_{k,t,\gamma}(s) =&\int_{0}^{s}\left| h'_{k,\gamma}(r)\right|^{\frac
		t\gamma}dr.
\end{align*}

Observe that $h_{k,\gamma}$ and $\Phi_{k,t,\gamma}$ are
$C^{1}$-functions with bounded derivative, depending on $\gamma, t$ and $k$. In particular, if
$w\in \sobr$, 
then $h_{k,\gamma}(w)$ and $\Phi_{k,t,\gamma}(w)$ are in $W_{0}^{1,r}(\Omega)$.

\smallskip

We see that 
\begin{equation}\label{hnew}
	|h'_{k,\gamma}(s)| \leq \gamma|h_{k,\gamma}(s)|^{\frac{\gamma-1}{\gamma}},
\end{equation}

and for every $t\geq \gamma$  there exists a positive constant $C$, depending on $\gamma$ and $t$ but independent of $k$, such that

\begin{equation}  \label{desgv}
	|s|^{\frac{t}{\gamma}-1}|\Phi_{k,t,\gamma}(s)|\leq C
	|h_{k,\gamma}(s)|^{\frac t\gamma}
\end{equation}
\begin{equation}\label{desgv2}
	|\Phi_{k,t,\gamma}(s)|\leq
	C |h_{k,\gamma}(s)|^{\frac1\gamma(1+t\frac {\gamma-1}\gamma)}
\end{equation}
and
\begin{equation}\label{desgv3}
	\left|h_{k,\gamma}\left(|s|^{\frac{q^*}{p^*}}\right)\right|^{p^*}
	\leq C\left|h_{k^{\frac{p^*}{q^*}},\gamma}\left(s\right)\right|^{q^*}.
\end{equation}

\medskip
\noindent
Moreover, for any $u \in  W_0^{1,p}(\Omega)$ we have

\begin{align}\label{gradhkgamma}
\displaystyle
 \nabla h_{k,\gamma}(u) = h'_{k,\gamma} (u) \nabla u,
\end{align}

and

\begin{align}\label{gradPhikgamma}
\displaystyle
\nabla \Phi_{k,\gamma p,\gamma }(u) = \Phi'_{k,\gamma p,\gamma } (u) \nabla \bar u = \left|h'_{k,\gamma} (u)\right|^p  \nabla u. 
\end{align}

\medskip
\noindent
For any $p>1$, $\alpha \geq 0$ and $s \geq 0$, we have

\begin{equation}\label{disr}
	s^p\leq \left(\alpha^2 +s^2\right)^{\frac{p-2}{2}}s^2  
	+\alpha^{p}.
\end{equation}

\medskip
\noindent
The inequality is obvious if $p\geq 2$ or $s=0$.\\
Otherwise, if $p\in(1,2)$ and $s \neq 0$, we have

$$ \displaystyle s^p \leq \left(\alpha^2 +s^2\right)^{\frac{p}{2}}=\left(\alpha^2 +s^2\right) \left(\alpha^2 +s^2\right)^{\frac{p-2}{2}}
\leq
\left(\alpha^2 +s^2\right)^{\frac{p-2}{2}}s^2 + \alpha^p.
$$

\medskip
\noindent
Let us fix $R>0$. For any  $\delta \in I$, let us consider  $\bar z=(\bar u, \bar v) \in D_{R,  \delta}(u_0,v_0)$.

In particular,  

$$ \left\langle I'_{\delta,\Psi_1,\Psi_2}(\bar z), \left( \Phi_{k,\gamma p,\gamma}(\bar u),0 \right)\right\rangle=0 \qquad \text{ for any } k, \gamma >1.$$

\medskip
\noindent
By Sobolev-Gagliardo-Nirenberg inequality, there exists $c:=c(p, N)>0$ such that

$$ \displaystyle \left( \into |u|^{p^*} \right)^{\frac{p}{p^*}} \leq c \into | \nabla u|^p   \qquad \text{ for any } u \in W_0^{1,p}(\Omega).$$

\medskip
\noindent
Furthermore, taking into account \eqref{contoconvessità}, \eqref{gradhkgamma}, \eqref{gradPhikgamma} and (\ref{disr}), we have 

\begin{align*}
	&\left(\into |h_{k,\gamma}(\bar u)|^{p^*}\right)^{\frac
		p{p^*}}\leq c \into |\nabla h_{k,\gamma}(\bar u)|^{p} = c \into |\nabla \bar
	u|^{p}|h'_{k,\gamma}(\bar u)|^{p} \nonumber
	\\
	&
	\leq c \into (\alpha^2 + |\nabla \bar u|^2)^{\frac{p-2}{2}}|\nabla \bar
	u|^{2} \ |h'_{k,\gamma}(\bar u)|^{p} + c \, \alpha^{p} \into |h'_{k,\gamma}(\bar u)|^{p} \nonumber\\
	&
\leq \frac{c}{\nu_1} \into \nabla \Psi_1 ( \nabla \bar{u} ) \cdot \nabla \bar{u} \ |h'_{k,\gamma}(\bar u)|^{p} + c \, \alpha^{p} \into |h'_{k,\gamma}(\bar u)|^{p}\nonumber\\
	&
= \frac{c}{\nu_1} \into \nabla \Psi_1 ( \nabla \bar{u} ) \cdot \nabla \Phi_{k,\gamma p,\gamma }(\bar u)   + c \, \alpha^{p} \into |h'_{k,\gamma}(\bar u)|^{p} \nonumber\\
	& = \frac{c}{\nu_1} \into H_s(\delta,x,\bar u,\bar v) \Phi_{k,\gamma p,\gamma }(\bar u)
	+ c \, \alpha^{p}\into |h'_{k,\gamma}(\bar u)|^{p}  \\
		& \leq \tilde{c}\into \left| H_s(\delta,x,\bar u,\bar v) \right| \left| \Phi_{k,\gamma p,\gamma }(\bar u) \right| 
	+\tilde{c} \into |h'_{k,\gamma}(\bar u)|^{p} .
\end{align*}

\medskip
\noindent
By $(\mathcal{H})$, we get
\begin{align}\label{integ1}
&	\left(\into |h_{k,\gamma}(\bar u)|^{p^*}\right)^{\frac	p{p^*}} \\
&	\leq \hat{c} \left(\into (|\bar u|^{p^*-1}+1)|\Phi_{k,\gamma p,\gamma }(\bar u)|
	+ \into |\bar v|^{q^*\frac{p^*-1}{p^*}}|\Phi_{k,\gamma p,\gamma }(\bar u)| 
	\right) \nonumber \\
&	+ \hat{c} \, \into |h'_{k,\gamma}(\bar u)|^{p}.
	\nonumber
\end{align}

\medskip
\noindent
By (\ref{hnew}) and H\"older's inequality with $\frac{p^*\gamma}{p(\gamma-1)}$ and $ \left( \frac{p^*\gamma}{p(\gamma-1)} \right)'$, we get

\begin{equation}\label{hne}
	\hat{c} \, \into |h'_{k,\gamma}(\bar u)|^{p}\leq \hat{c} \, \gamma^p \,\into |h_{k,\gamma}(\bar u)|^{p \frac{\gamma - 1}{\gamma}} \leq 
	c_0 \left(\into |h_{k,\gamma}(\bar u)|^{p^*}\right)^{\frac{p}{p^*}\frac{\gamma-1}{\gamma}}
\end{equation}

where $c_0>0$ depends on  $C_0, \left| \Omega \right|, p, N, \alpha, \nu_1$ and $\gamma$  but not on $k$ and $\delta$.

\smallskip
\noindent
For any $\sigma>1$, $r>1$ and $w$ in $\sobr$, we denote by
\[\Omega_{\sigma,w}=\{x\in \Omega\ : \ |w(x)| > \sigma\}.\]

\medskip
\noindent
From now on, when necessary, we redefine the positive constant $c_1$,  depending on $C_0,\left| \Omega \right|, p,q, N, \alpha, \nu_1$   and $\gamma$  but not on $k$ and $\delta$. Therefore, using \eqref{desgv}, \eqref{desgv2} together with H\"older inequality with conjugated exponents $\frac{p^* \gamma}{p \gamma + 1 - p}$ and $\left( \frac{p^* \gamma}{p \gamma + 1 - p} \right)'$, and with conjugated exponents $\frac{p^*}{p^*-p}$ and  $\frac{p^*}{p}$ respectively, we have

\begin{align}\label{integ2}
\displaystyle
& \into (|\bar u|^{p^*-1}+1)|\Phi_{k,\gamma p,\gamma }(\bar u)| \\
& = \into |\Phi_{k,\gamma p,\gamma }(\bar u)| + \int_{\Omega \setminus \Omega_{\sigma,\bar{u}}} |\bar u|^{p^*-1}|\Phi_{k,\gamma p,\gamma }(\bar u)| + \int_{\Omega_{\sigma,\bar{u}}} |\bar u|^{p^*-1}|\Phi_{k,\gamma p,\gamma }(\bar u)| \nonumber \\
& \leq (\sigma^{p^*-1}+1)\into|\Phi_{k,\gamma p,\gamma }(\bar u)| +\int_{\Omega_{\sigma,\bar u}} |\bar u|^{p^*-p} |\bar u|^{p-1} |\Phi_{k,\gamma p,\gamma }(\bar u)| \nonumber\\
&\leq 2\sigma^{p^*-1}\into|\Phi_{k,\gamma p,\gamma }(\bar u)|  + c_1\int_{\Omega_{\sigma,\bar u}} |\bar u|^{p^*-p} |h_{k,\gamma}(\bar u)|^p \nonumber \\
&\leq c_1\sigma^{p^*-1}\into |h_{k,\gamma}(\bar u)|^{\frac{p\gamma +1 -p}{\gamma}} +c_1\int_{\Omega_{\sigma,\bar u}} |\bar u|^{p^*-p} |h_{k,\gamma}(\bar u)|^p. \nonumber \\
&\leq c_1 \sigma^{p^*-1}\left(\into |h_{k,\gamma}(\bar u)|^{p^*}\right)^{\frac{p}{p^*}\frac{\gamma p+1-p}{\gamma p}} + c_1\|\bar u  \|^{p^*-p}_{L^{p^*}(\Omega_{\sigma,\bar u})}  \left(\into
	\left|h_{k,\gamma }(\bar u)\right|^{p^*}\right)^{\frac p {p^*}}. \nonumber 
\end{align}

\noindent
Now, we deal with the integral $\into |\bar v|^{q^*\frac{p^*-1}{p^*}}|\Phi_{k,\gamma p,\gamma }(\bar u)| $ in \eqref{integ1}.
By using \eqref{desgv}, \eqref{desgv2}, \eqref{desgv3}, the fact that $\Phi_{k,\gamma p,\gamma}(s)$ is non
decreasing for $s \geq 0$ and H\"older inequality as before, we obtain

\begin{align*} 
	&\into |\bar v|^{q^*\frac{p^*-1}{p^*}}|\Phi_{k,\gamma p,\gamma }(\bar u)|\\
	& = \int_{ \Omega \setminus \Omega_{\sigma,\bar v}}  |\bar v|^{q^*\frac{p^*-1}{p^*}}|\Phi_{k,\gamma p,\gamma }(\bar u)| + \int_{ \Omega_{\sigma,\bar v}} |\bar v|^{\frac{q^*}{p^*}(p^*-p)}|\bar v|^{\frac{q^*}{p^*}(p-1)}|\Phi_{k,\gamma p,\gamma }(\bar u)| \\
	&\leq  \sigma^{q^*\frac{p^*-1}{p^*}}\into |\Phi_{k,\gamma p,\gamma }(\bar u)| 
	+\int_{\Omega_{\sigma,\bar v}\cap \{|\bar v|^\frac{q^*}{p^*}\leq |\bar u|\}} |\bar v|^{\frac{q^*}{p^*}(p^*-p)} |\bar u|^{p-1} |\Phi_{k,\gamma p,\gamma }(\bar u)| 
	\\
	&+\int_{\Omega_{\sigma,\bar v}\cap \{|\bar v|^\frac{q^*}{p^*}\geq |\bar u|\}} |\bar v|^{\frac{q^*}{p^*}(p^*-p)} \bigl(|\bar v|^{\frac{q^*}{p^*}}\bigr)^{p-1} |\Phi_{k,\gamma p,\gamma }(|\bar v|^{\frac{q^*}{p^*}})| 
	\\
	&\leq c_1\sigma^{q^*\frac{p^*-1}{p^*}}\into |h_{k,\gamma}(\bar u)|^{\frac{p\gamma +1 -p}{\gamma}}
	+c_1 \int_{\Omega_{\sigma,\bar v}\cap \{|\bar v|^\frac{q^*}{p^*}\leq |\bar u|\}} |\bar v|^{\frac{q^*}{p^*}(p^*-p)}|h_{k,\gamma}(\bar u)|^p
	\\
	&+c_1 \int_{\Omega_{\sigma,\bar v}\cap \{|\bar v|^\frac{q^*}{p^*}\geq |\bar u|\}} |\bar v|^{\frac{q^*}{p^*}(p^*-p)}
	|h_{k,\gamma}\bigl(|\bar v|^{\frac{q^*}{p^*}}\bigr)|^p
	\\
	&\leq c_1\sigma^{q^*\frac{p^*-1}{p^*}} 
	\Bigl(\into |h_{k,\gamma
	}(\bar u)|^{p^*}\Bigr)^{\frac{p}{p^*}\frac{\gamma p+1-p}{\gamma p}}\\
	&+c_1 \|\bar v \|^{\frac{q^*}{p^*}(p^*-p)}_{L^{q^*}(\Omega_{\sigma,\bar v})} 
	\left(	
	\left(\into
	\left|h_{k,\gamma }(\bar u)\right|^{p^*}\right)^{\frac p {p^*}}
	+\left(\int_{\Omega}
	\bigg| h_{k^{\frac{p^*}{q^*}},\gamma }\left(\bar v\right)\bigg|^{q^*}\right)^{\frac p {p^*}}
	\right).
	\nonumber
\end{align*}

\medskip
\noindent
Combining with  \eqref{integ1}, \eqref{hne} and \eqref{integ2}, we get

\begin{align*}
	&\left(\into |h_{k,\gamma}(\bar u)|^{p^*}\right)^{\frac
		p{p^*}} \\
	&\leq c_1
	\sigma^{p^*-1}\left(\into |h_{k,\gamma
	}(\bar u)|^{p^*}\right)^{\frac{p}{p^*}\frac{\gamma p+1-p}{\gamma p}}
	+ c_1\|\bar u  \|^{p^*-p}_{L^{p^*}(\Omega_{\sigma,\bar u})}  \left(\into
	\left|h_{k,\gamma }(\bar u)\right|^{p^*}\right)^{\frac p {p^*}}
	\\
	&+ c_1\sigma^{\frac{q^*}{p^*}(p^*-1)} 
	\left( \into |h_{k,\gamma
	}(\bar u)|^{p^*}\right)^{\frac{p}{p^*}\frac{\gamma p+1-p}{\gamma p}}\\
	&+c_1 \|\bar v \|^{\frac{q^*}{p^*}(p^*-p)}_{L^{q^*}(\Omega_{\sigma,\bar v})} 
	\left(	
	\left(\into
	\left|h_{k,\gamma }(\bar u)\right|^{p^*}\right)^{\frac p {p^*}}
	+\left(\into
	\bigg| h_{k^{\frac{p^*}{q^*}},\gamma }\left(\bar v\right) \bigg|^{q^*}\right)^{\frac p {p^*}}
	\right)\\
	&+c_1 \left(\into |h_{k,\gamma}(\bar u)|^{p^*}\right)^{\frac{p}{p^*}\frac{\gamma-1}{\gamma}},
\end{align*}

\medskip
\noindent
where the constant $c_1$ depends on $C_0,\left| \Omega \right|, p,q, N, \alpha, \nu_1$  and $\gamma$  but not on $k$ and $\delta$.

\bigskip
\noindent
By using Lemma \ref{V} with $\varepsilon=(4c_1)^{\frac{p^*}{p-p^*}}$ and $u_0 \in W_0^{1,p}(\Omega)$, and  with $\varepsilon=(4 c_1)^{\frac{p^*}{p-p^*}}$ and $v_0 \in W_0^{1,q}(\Omega)$ respectively, we infer that there exists  $\sigma_1>1$ and $R_1>0$ such that, for any $\sigma\geq \sigma_1$ and for any $(\bar u,\bar v)\in D_{R_1, \delta}(u_0,v_0),$

$$ \displaystyle   c_1\|\bar u  \|^{p^*-p}_{L^{p^*}(\Omega_{\sigma,\bar u})} + c_1 \|\bar v \|^{\frac{q^*}{p^*}(p^*-p)}_{L^{q^*}(\Omega_{\sigma,\bar v})} \leq \frac{1}{2}.$$

\medskip
\noindent
Hence, we infer that 

\begin{align}\label{k0}
\displaystyle
        \frac{1}{2} \left(\into |h_{k,\gamma}(\bar u)|^{p^*}\right)^{\frac{p}{p^*}} 
& \leq  c_1 \,  \Bigl(\sigma^{p^*-1}+\sigma^{\frac{q^*}{p^*}(p^*-1)} \Bigr) \left(\into |h_{k,\gamma}(\bar u)|^{p^*}\right)^{\frac{p}{p^*}\frac{\gamma p+1-p}{\gamma p}} \\
&	+  c_1  \,  \| \bar v \|^{\frac{q^*}{p^*}(p^*-p)}_{L^{q^*}(\Omega_{\sigma,\bar v})} \left(\into
	\bigg| h_{k^{\frac{p^*}{q^*}},\gamma }\left(\bar v\right) \bigg|^{q^*}\right)^{\frac p {p^*}}\nonumber\\
&	+  c_1  \,  \left(\into |h_{k,\gamma}(\bar u)|^{p^*}\right)^{\frac{p}{p^*}\frac{\gamma-1}{\gamma}} . \nonumber
\end{align}

\medskip
\noindent
If $\eta \in (0,1)$, by using Young inequality with conjugated exponents $\frac{1}{\eta}$ and $ \frac{1}{1-\eta}$, we obtain that

\begin{equation}\label{inyou}
\displaystyle	ax^{\eta} \leq \left( \frac{x}{8} \right)^{\eta} 8a \leq \frac x 8 + 
	(8a)^{1/(1-\eta)} \qquad \forall \, a,x \geq 0.
\end{equation}

\bigskip
\noindent
Therefore,  since  $ \frac{\gamma p+1-p}{\gamma p}<1$ and $ \frac{\gamma-1}{\gamma}<1$, we infer

\begin{align*}
	c_1 \Bigl(\sigma^{p^*-1}+\sigma^{\frac{q^*}{p^*}(p^*-1)}\Bigr)
&	\left(\into |h_{k,\gamma
	}(\bar u)|^{p^*}\right)^{\frac{p}{p^*}\frac{\gamma p+1-p}{\gamma p}}
	\hspace{-4mm}\\
& \leq \ \frac 1 8 \left(\into |h_{k,\gamma
	}(\bar u)|^{p^*}\right)^{\frac{p}{p^*}}+c_1\, \Bigl(\sigma^{p^*-1}+\sigma^{\frac{q^*}{p^*}(p^*-1)}\Bigr)^{\frac{\gamma p}{p-1}}
\end{align*}

\noindent
and 

\[c_1 \left(\into |h_{k,\gamma}(\bar u)|^{p^*}\right)^{\frac{p}{p^*}\frac{\gamma-1}{\gamma}}
\leq \frac 1 8 \left(\into |h_{k,\gamma
}(\bar u)|^{p^*}\right)^{\frac{p}{p^*}}+c_1,
\]

\bigskip
\noindent
so that \eqref{k0} becomes

\begin{align*} 
	&\frac 1 4\left(\into |h_{k,\gamma}(\bar u)|^{p^*}\right)^{\frac
		p{p^*}}\\
	&\leq 
	c_1 \left[ \Bigl(\sigma^{p^*-1}+\sigma^{\frac{q^*}{p^*}(p^*-1)}\Bigr)^{\frac{\gamma p}{p-1}} + 1 \right]
	+  c_1 \, \|\bar v \|^{\frac{q^*}{p^*}(p^*-p)}_{L^{q^*}(\Omega_{\sigma,\bar v})} 
	\left(\into
	\bigg|h_{k^{\frac{p^*}{q^*}},\gamma }\left(\bar v\right)\bigg|^{q^*}\right)^{\frac p {p^*}}\hspace{-3mm}.
\end{align*}

\bigskip
\noindent
Thus,  we have shown that for any 
$k,\gamma >1$ there exists a positive constant $c_1$ depending on $C_0,\left| \Omega \right|, p,q, N, \alpha, \nu_1$  and $\gamma$  but not on $k$ and $\delta$, and there are  $\sigma_1>1$ and a suitably small $R_1>0$, depending on $c_1$ and $(u_0,v_0)$,  such that for any $(\bar u, \bar v) \in D_{R_1,  \delta}(u_0,v_0)$ and any $\sigma\geq \sigma_1$ we have

\begin{align}\label{integ4}
	\ &\into |h_{k,\gamma}(\bar u)|^{p^*}\\
	&\leq 
	c_1\left[\Bigl(\sigma^{p^*-1}+\sigma^{\frac{q^*}{p^*}(p^*-1)}\Bigr)^{\frac{\gamma p^*}{p-1}}
	 + 1 \right] +  c_1\, \|\bar v \|^{\frac{q^*}{p}(p^*-p)}_{L^{q^*}(\Omega_{\sigma,\bar v})} 
	\into
	\bigg|h_{k^{\frac{p^*}{q^*}},\gamma }\left(\bar v\right)\bigg|^{q^*}. \nonumber
\end{align}

\bigskip
\noindent
Reasoning in a similar way, exploiting that 

$$\left\langle I'_{\delta,\Psi_1,\Psi_2}(\bar z), \left(0,\Phi_{\tilde k,\gamma q,\gamma}(\bar v) \right)\right\rangle=0 \qquad \text{ for any } \tilde k, \gamma>1,$$

\medskip
\noindent
we infer that for any $\tilde{k},\gamma >1$ there exists a positive constant $c_2$ depending on $C_0,\left| \Omega \right|, p,q, N, \beta, \nu_2$  and $\gamma$  but not on $\tilde{k}$ and $\delta$, and there are  $\sigma_2>1$ and a suitably small $R_2>0$, depending on $c_2$ and $(u_0,v_0)$,  such that for any $(\bar u, \bar v) \in D_{R_2,  \delta}(u_0,v_0)$ and any $\sigma\geq \sigma_2$ we have

\begin{align}\label{integ5}
	& \into |h_{\tilde k,\gamma}(\bar v)|^{q^*}\\
	&\leq c_2 \left[ \Bigl(\sigma^{q^*-1}+\sigma^{\frac{p^*}{q^*}(q^*-1)} \Bigr)^{\frac{\gamma q^*}{q-1}}
	+1 \right] +  c_2 \, \|\bar u \|^{\frac{p^*}{q}(q^*-q)}_{L^{p^*}(\Omega_{\sigma,\bar u})} 
	\into
	\bigg|h_{\tilde k^{\frac{q^*}{p^*}},\gamma }\left(\bar u\right)\bigg|^{p^*}.
	\nonumber
\end{align}

\bigskip
\noindent
Let us consider a suitable $\bar{\sigma}>1$ and a suitably small $\bar{R}>0$ such that both \eqref{integ4} and \eqref{integ5} hold for any $\sigma \geq \bar{\sigma}$ and for any $0<R \leq \bar{R}$.\\
Setting $\tilde{k}=k^{\frac{p^*}{q^*}}$ in (\ref{integ5}) and substituting in (\ref{integ4})  we obtain

\begin{align*}
\displaystyle
    \into |h_{k,\gamma}(\bar u)|^{p^*} 
&	\leq c_1 \left[ \Bigl(\sigma^{p^*-1}+\sigma^{\frac{q^*}{p^*}(p^*-1)} \Bigr)^{\frac{\gamma p^*}{p-1}} + 1 \right]\\
&	+  c_1 \, c_2 \, \|\bar v \|^{\frac{q^*}{p}(p^*-p)}_{L^{q^*}(\Omega_{\sigma,\bar v})} 
	 \left[ \Bigl(\sigma^{q^*-1}+\sigma^{\frac{p^*}{q^*}(q^*-1)} \Bigr)^{\frac{\gamma q^*}{q-1}}
	+ 1 \right] \nonumber  \\
&  + c_1 \, c_2 \, \|\bar v \|^{\frac{q^*}{p}(p^*-p)}_{L^{q^*}(\Omega_{\sigma,\bar v})}  \, \|\bar u \|^{\frac{p^*}{q}(q^*-q)}_{L^{p^*}(\Omega_{\sigma,\bar u})} \into|h_{k,\gamma }\left(\bar u\right)|^{p^*}.
\end{align*}

\medskip
\noindent
By using again Lemma \ref{V} with $\varepsilon=2^{\frac{q}{q-q^*}}$ and $u_0 \in W_0^{1,p}(\Omega)$, and  with $\varepsilon=(c_1c_2)^{\frac{p}{p-p^*}}$ and $v_0 \in W_0^{1,q}(\Omega)$ respectively, we infer that there exists  $\tilde{\sigma} \geq \bar{\sigma}$ and $ \tilde{R}  \in (0,\bar{R}]$ such that, for any $\sigma\geq \tilde{\sigma}$ and for any $(\bar u,\bar v)\in D_{\tilde{R}, \delta}(u_0,v_0),$

$$ \displaystyle  c_1 c_2\, \|\bar v \|^{\frac{q^*}{p}(p^*-p)}_{L^{q^*}(\Omega_{\sigma,\bar v})} \leq 1 \quad \text{and} \quad \|\bar u \|^{\frac{p^*}{q}(q^*-q)}_{L^{p^*}(\Omega_{\sigma,\bar u})} \leq \frac{1}{2}.$$

\medskip
\noindent
Hence, we finally get that for any $(\bar u, \bar v) \in D_{R, \delta}(u_0,v_0)$

\[\into |h_{k,\gamma}(\bar u)|^{p^*} \leq c_3  \qquad \text{ for any } k,\gamma>1, \]

\noindent
and similarly

\[\into |h_{k,\gamma}(\bar v)|^{q^*} \leq c_3 \qquad \text{ for any } k,\gamma>1, \]

\medskip
\noindent
where the positive constant $c_3$ depends on $C_0,\left| \Omega \right|, p,q, N, \alpha, \nu_1, \beta, \nu_2$  and $\gamma$  but not on $k$ and $\delta$, and in addition $R>0$ is a suitable radius.

\medskip
\noindent
Thus we can apply Fatou Lemma and, passing to limit for $k \to \infty$, we get 

\begin{equation*}
	\into |\bar u|^{\gamma  p^*} \leq C \quad \text{and} \quad \into |\bar v|^{\gamma  q^*} \leq  C 
	\qquad \qquad \forall (\bar u, \bar v) \in D_{R, \delta}(u_0,v_0),
\end{equation*}

\noindent
where the positive constant $C$ still depends on $C_0,\left| \Omega \right|, p,q, N, \alpha, \nu_1, \beta, \nu_2$  and $\gamma$ but not on $\delta$.
\end{proof}

\medskip\noindent
{\mbox {\it Proof of Theorem~\ref{key}.~}} From now on, we will omit $dx$ and we will redefine the positive constant $C$ when necessary.\\
Fixing $m>N$, by Proposition \ref{integrability} and assumption  $(\mathcal{H})$ we infer that there are $C, R >0$ depending on $C_0,\left| \Omega \right|, p,q, N, \alpha, \nu_1, \beta$,$\nu_2$ and $m$  but not on $\delta$, such that  $H_s(\delta,x,\bar u,\bar v) \in L^m(\Omega)$ and $\, H_t(\delta,x,\bar u,\bar v)\in L^m(\Omega)$ for any $(\bar u, \bar v) \in D_{R, \delta}(u_0,v_0)$, and
\begin{equation}\label{dis1}
\displaystyle \|H_s(\delta,x,\bar u,\bar v)\|_m^{p'}, \quad  \|H_t(\delta,x,\bar u,\bar v)\|_m^{q'} \leq C,
\end{equation}
\noindent
where $p'$ and $q'$  are the conjugated exponents of $p$ and $ q$ respectively.

\medskip
\noindent 
Fixed $(\bar u, \bar v) \in D_{R, \delta}(u_0,v_0)$, for any $k \geq 1$ we denote by  
$$A^{\bar{u}}_k= \left\{ x\in \Omega\ :\ | \bar{u} (x)| > k\right\},$$
and we define $G_k: \mathbb{R} \to \mathbb{R}$  as

$$ \displaystyle
G_k(r):=
\begin{cases}
0 & \text{ if } |r| \leq k, \bigskip \\
 \text{sign}(r) \Bigl[ \, |r| - k \, \Bigr] & \text{ if } |r| > k. \\
\end{cases}
$$

\noindent
Since $G_k(\bar{u}) \in W_0^{1,p}(\Omega)$ and $\langle I'_{\delta,\Psi_1,\Psi_2}(\bar{u},\bar{v}),\left(G_k(\bar u),0\right)\rangle=0,$  by using \eqref{contoconvessità}, \eqref{disr},   and  denoting by $\bar{f}_s:=H_s(\delta,x, \bar{u}, \bar{v}) $, we have

\begin{align}\label{calc1}
\displaystyle
&  \int_{A^{\bar{u}}_k} \left| \nabla \bar{u} \right|^p = \int_{A^{\bar{u}}_k} \left| \nabla G_k( \bar{u}) \right|^p =  \int_{A^{\bar{u}}_k} \left| \nabla \bar{u} \right|^p G'_k(\bar{u})^p = \int_{A^{\bar{u}}_k} \left| \nabla \bar{u} \right|^p G'_k(\bar{u}) \\
& \leq \int_{A^{\bar{u}}_k} \left( \alpha^2 + \left| \nabla \bar{u} \right|^2  \right)^{\frac{p-2}{2}} \left| \nabla \bar{u}  \right|^2 G'_k(\bar{u}) + \alpha^{p} \int_{A^{\bar{u}}_k} G'_k(\bar{u})  \nonumber \\
& \leq \into \left( \alpha^2 + \left| \nabla \bar{u} \right|^2  \right)^{\frac{p-2}{2}} \left| \nabla \bar{u}  \right|^2 G'_k(\bar{u})   + \alpha^{p} \left| A^{\bar{u}}_k \right|  \nonumber \\
& \leq \frac{1}{\nu_1} \into \nabla \Psi_1 ( \nabla \bar{u} ) \cdot \nabla \bar{u} \, G'_k(\bar{u})  + \alpha^{p} \left| A^{\bar{u}}_k \right| \nonumber \\
& = \frac{1}{\nu_1} \into \nabla \Psi_1 ( \nabla \bar{u} ) \cdot \nabla G_k(\bar{u})  + \alpha^{p} \left| A^{\bar{u}}_k \right| \nonumber \\
& = \frac{1}{\nu_1} \into \bar{f}_s \, G_k( \bar{u}) + \alpha^{p} \left| A^{\bar{u}}_k \right| \nonumber \\
& = \frac{1}{\nu_1} \int_{A^{\bar{u}}_k} \bar{f}_s \, G_k( \bar{u}) + \alpha^{p} \left| A^{\bar{u}}_k \right| \nonumber \\
& \leq  \frac{1}{\nu_1} \int_{A^{\bar{u}}_k} \left| \bar{f}_s \right| \left| G_k(\bar{u})  \right|  + \alpha^{p} \left| A^{\bar{u}}_k \right|. \nonumber  
\end{align}

\medskip
\noindent
By Sobolev-Gagliardo-Nirenberg inequality,  H\"older inequality with conjugated exponents  $({p^*})'$ and  $p^*$, and Young inequality with conjugated exponents $p'$ and $p$, we have 

\begin{align*}
\displaystyle 
\int_{A^{\bar{u}}_k} \left| \bar{f}_s \right| \left| G_k(\bar{u})  \right|  
& \leq \left(  \int_{A^{\bar{u}}_k} \left| \bar{f}_s \right|^{{p^*}'}  \right)^{\frac{1}{{p^*}'}} \left( \int_{A^{\bar{u}}_k} \left| G_k(\bar{u})  \right|^{p^*}  \right)^{\frac{1}{p^*}}   \\
& = \left(  \int_{A^{\bar{u}}_k} \left| \bar{f}_s \right|^{{p^*}'}  \right)^{\frac{1}{{p^*}'}} \left( \into \left| G_k(\bar{u})  \right|^{p^*}  \right)^{\frac{1}{p^*}}  \nonumber \\
& \leq  C \left(  \int_{A^{\bar{u}}_k} \left| \bar{f}_s \right|^{{p^*}'}  \right)^{\frac{1}{{p^*}'}} \left( \into \left|  \nabla G_k(\bar{u})  \right|^{p}  \right)^{\frac{1}{p}} \nonumber \\
& =  C \left( \int_{A^{\bar{u}}_k}\left| \bar{f}_s \right|^{{p^*}'} \right)^{\frac{1}{{p^*}'}} \left( \int_{A^{\bar{u}}_k} \left|  \nabla \bar{u}  \right|^{p}   \right)^{\frac{1}{p}} \nonumber  \\
& \leq \frac{1}{p'} C^{p'} \left( \int_{A^{\bar{u}}_k}\left| \bar{f}_s \right|^{{p^*}'}  \right)^{\frac{p'}{{p^*}'}} + \frac{1}{p}  \int_{A^{\bar{u}}_k} \left|  \nabla \bar{u}  \right|^{p} .  \nonumber
\end{align*}

\medskip
\noindent
Therefore, we have 

\begin{equation}\label{nuovoconto}
\displaystyle \frac{1}{\nu_1} \int_{A^{\bar{u}}_k} \left| \bar{f}_s \right| \left| G_k(\bar{u})  \right| \, \leq \frac{1}{ \nu_1 \, p'} C^{p'} \left( \int_{A^{\bar{u}}_k}\left| \bar{f}_s \right|^{{p^*}'}  \right)^{\frac{p'}{{p^*}'}} + \frac{1}{ \nu_1 \, p}  \int_{A^{\bar{u}}_k} \left|  \nabla \bar{u}  \right|^{p} . 
\end{equation}

\medskip
\noindent
Taking into account \eqref{calc1} and \eqref{nuovoconto}, we have proved

\begin{equation}\label{calc3}
\displaystyle \left( 1 - \frac{1}{\nu_1 \, p }  \right)\int_{A^{\bar{u}}_k} \left| \nabla \bar{u} \right|^p   \leq \frac{C^{p'}}{\nu_1 \, p'} \left(  \into \left| \bar{f}_s \right|^{{p^*}'} \right)^{\frac{p'}{{p^*}'}} +  \alpha^{p} \left| A^{\bar{u}}_k \right|,
\end{equation}

\medskip
\noindent
by which, considering that $\nu_1 > \frac{1}{p}$, we get

\begin{equation}\label{calcnu1}
\displaystyle\int_{A^{\bar{u}}_k} \left| \nabla \bar{u} \right|^p   \leq \frac{p}{p'(\nu_1 \, p - 1)}C^{p'}\left(  \into \left| \bar{f}_s \right|^{{p^*}'}  \right)^{\frac{p'}{{p^*}'}} +  \frac{\nu_1 \, p}{\nu_1 \, p -1}\alpha^{p} \left| A^{\bar{u}}_k \right|,
\end{equation}

\medskip
\noindent
Since $m>N$, we deduce that 

$$ \displaystyle m > {p^*}'=\frac{Np}{Np-N+p} \quad \text{and} \quad p' \left( \frac{1}{{p^*}'} - \frac{1}{m}  \right) >1.$$

%
%
%
%
%
%
%

\medskip
\noindent
Now, by  (\ref{dis1}) and  H\"older inequality with conjugated exponents $\displaystyle \frac{m}{{p^*}'}$ and $\displaystyle \frac{m}{{m-p^*}'}$, we have

\begin{align}\label{calc4}
\displaystyle
\left( \int_{A^{\bar{u}}_k} \left| \bar{f}_s \right|^{{p^*}'}  \right)^{\frac{p'}{{p^*}'}} 
& \leq  
  \left( \int_{A^{\bar{u}}_k} |\bar{f}_s|^m \right)^\frac{p'}{m} \, |A^{\bar{u}}_k|^{p' \left( \frac{1}{{p^*}'} - \frac{1}{m} \right)}.   \\
& =  \|H_s(\delta,x,\bar u,\bar v)\|_m^{p'} \, |A^{\bar{u}}_k|^{p' \left( \frac{1}{{p^*}'} - \frac{1}{m} \right)} \nonumber \bigskip\\
& \leq  C \, |A^{\bar{u}}_k|^{p' \left( \frac{1}{{p^*}'} - \frac{1}{m} \right)} .   \nonumber
\end{align}

\medskip
\noindent
Considering \eqref{calcnu1} and  \eqref{calc4}, we have proved that there exists $C>0$ such that

\begin{equation}\label{calc5}
\displaystyle \int_{A^{\bar{u}}_k} \left| \nabla \bar{u} \right|^p   \leq  C \left( \, |A^{\bar{u}}_k|^{p' \left( \frac{1}{{p^*}'} - \frac{1}{m} \right)} +   \,  \left| A^{\bar{u}}_k \right| \right).
\end{equation}

\medskip
\noindent
Now, by using  Lemma \ref{V2023}, there exists $k_0 \geq 1$ such that for any $(\bar u, \bar v) \in D_{R, \delta}(u_0,v_0)$ and for any $k$ greater then $k_0$, we get

$$ \displaystyle \left| A^{\bar{u}}_k  \right| < 1. $$

%
%
	
%
%
%
%
%

%
%
%

\medskip
\noindent
By \eqref{calc5} and $ \displaystyle p' \left( \frac{1}{{p^*}'} - \frac{1}{m}  \right) >1,$  for any $k \geq k_0$ we have

\begin{equation}\label{calc6}
\displaystyle \int_{A^{\bar{u}}_k} \left| \nabla \bar{u} \right|^p   \leq 2 \bar{C}  \,  \left| A^{\bar{u}}_k \right|.
\end{equation}

\medskip
\noindent
Therefore, by applying Theorem \ref{Mammoliti} with $\varepsilon= 1 - \frac{p}{p^*}$ and $\theta=0$, we deduce that $\bar u\in L^\infty(\Omega)$. Similarly, we deduce that  $\bar v\in L^\infty(\Omega)$.\\
Finally, we have proved that there exists a suitable small $R>0$ and a constant $C>0$ depending on $C_0,\left| \Omega \right|, p,q, N, \alpha, \nu_1, \beta$ and $\nu_2$  but not on $\delta$, such that for any $(\bar u, \bar v) \in D_{R,\delta}(u_0,v_0)$,
$\bar u$ and $\bar v$ are in $L^\infty(\Omega)$ and
\[\|\bar u\|_{\infty}, \ \|\bar v\|_{\infty}\leq C.
\]

\qed

\begin{remark}
We point out that if $ p \geq 2$ the constant $C$ does not depend on $\alpha$. In fact, in the proof of theorem \ref{key}  we can use instead of \eqref{disr} the inequality 

$$ \displaystyle 	s^p \leq \left(\alpha^2 +s^2\right)^{\frac{p-2}{2}}s^2, $$

\medskip
\noindent 
that holds for any $p \geq 2$ and for any $s \geq 0$.\\
Similarly, if $q \geq 2$  the constant $C$ does not depend on $\beta$.
\end{remark}

\medskip
\noindent
{\bf Author contributions}\\
The authors have accepted responsibility for the entire content of this manuscript and
approved its submission.

\medskip
\noindent
{\bf Competing interests} \\
The authors state no conflict of interest.

\medskip
\noindent
{\bf Acknowledgments}\\
The authors thank INdAM-GNAMPA. The second and third authors are supported by PRIN PNRR P2022YFAJH {\sl “Linear and Nonlinear PDEs: New directions and applications”}. 
The first and second authors thank PNRR MUR project CN00000013 HUB - National
Centre for HPC, Big Data and Quantum Computing (CUP H93C22000450007). The first author is also supported by INdAM-GNAMPA project { \sl “Critical and limiting phenomena in nonlinear elliptic systems”} (CUP E5324001950001).  The third author is also supported by the Italian Ministry of University and Research under the Programme “Department of Excellence” Legge 232/2016 (Grant No. CUP - D93C23000100001).

\medskip
\noindent
{\bf Data availability}\\
Not applicable.


\begin{thebibliography}{999}

\bibitem{ACCFM}  C. A. Antonini, A. Cianchi, G. Ciraolo, A. Farina, V. Maz'Ya,  Global second‐order estimates in anisotropic elliptic problems, \textit{Proc. Lond. Math. Soc.} \textbf{3} (2025), no. 130, e70034.

\bibitem{ACF} C.A. Antonini, G. Ciraolo,  A. Farina, Interior regularity results for inhomogeneous anisotropic quasilinear equations, \textit{Math. Ann.} \textbf{3} (2023), no. 387, 1745-1776.

\bibitem{boccardodefiguerido} L. Boccardo, D. G. de Figueiredo, Some remarks on a system of quasilinear elliptic equations, {\em NoDEA Nonlinear Differential Equations Appl.} {\bf 9} (2002), no. 3, 309-323.

\bibitem{BDO} L. Boccardo, A. Dall'Aglio and L. Orsina, Existence and regularity results for some elliptic equations with degenerate coercivity, \textit{Atti Del Seminario Matematico E Fisico Universita Di Modena} 46 (1998), 51-82.


\bibitem{BDMS} V. Bögelein, F. Duzaar, P. Marcellini, C. Scheven,   Boundary regularity for elliptic systems with p, q-growth, {\em J. Math. Pures Appl.} {\bf 159}, (2022), 250-293.





\bibitem{BCV}
N. Borgia, S. Cingolani, G. Vannella, Nontrivial solutions for resonance quasilinear elliptic systems, {\em Adv. Nonlinear Anal.} {\bf 13}  (2024), no. 1, 20240005.




\bibitem{CSS} A. Candela,  A. Salvatore, C. Sportelli, Existence and multiplicity results for a class of coupled quasilinear elliptic systems of gradient type, {\em Adv. Nonlinear Stud.} { \bf 21} (2021), no. 2, 461-488.

\bibitem{CY} L. Chen and Y. Yang,  Talenti comparison results and rigidity results for anisotropic $p$-Laplacian operator with Robin boundary conditions { \em Adv. Nonlinear Stud.} 
(2025), https://doi.org/10.1515/ans-2023-0194.

\bibitem{CM} A. Cianchi and V. G. Maz'ya,  Optimal second-order regularity for the p-Laplace system. {\em J. Math. Pures Appl.} {\bf 132}, (2019), 41-78.

\bibitem{CCMV}
J. Carmona, S. Cingolani, P. Martinez-Aparicio, G. Vannella,  Regularity and Morse index of the solutions to critical quasilinear elliptic systems, {\em Comm. Partial Differential Equations} {\bf 38}, (2013), 1675-1711.

\bibitem{CDV} S. Cingolani, M. Degiovanni, G.  Vannella,  Amann-Zehnder type results for \mbox{$p$-Laplace} problems, {\em Ann. Mat. Pura Appl.} {\bf 197} (2018), no. 2, 605-640.

\bibitem{CDM2} P. Clément, D.G. de Figueiredo, E. Mitidieri, A priori estimates for positive solutions of semilinear elliptic systems via Hardy-Sobolev inequalities, \textit{Pitman Research Notes in Mathematics Series}  (1996), 73–91.

\bibitem{CDM1} P. Clément, D.G. de Figueiredo, E. Mitidieri, Positive solutions of semilinear elliptic systems, \textit{Djairo G. de Figueiredo-Selected Papers} (2014), 369-386.



\bibitem{CFV} M. Cozzi, A. Farina, E. Valdinoci, Monotonicity formulae and classification results for singular, degenerate, anisotropic PDEs, \textit{Adv. Math.} \textbf{293},  (2016), 343–381.

\bibitem{DeY} D.G. de Figueiredo and J. Yang, A priori bounds for positive solutions of a non-variational elliptic system, \textit{Djairo G. de Figueiredo-Selected Papers} (2014) 483-499.

\bibitem{DeFP} C. De Filippis and M. Piccinini, Borderline global regularity for nonuniformly elliptic systems, { \em Int. Math. Res. Not. IMRN}  \textbf{20}, (2023), 17324-17376.

\bibitem{DeNM}
P. L. De Nàpoli and M. C. Mariani, (2002). Quasilinear elliptic systems of resonant type and nonlinear eigenvalue problems, {\em Abst. Appl. Anal.}  \textbf{7} (2002), no. 3, 155-167. 

\bibitem{diazthelin}
J.I. Diaz and F.  de Thelin  On a nonlinear parabolic problem arising in some models related to turbulent flows. {\em SIAM J. Math. Anal.} {\bf 25} (1994), no. 4, 1085-1111.


\bibitem{dibenedetto1983} E. DiBenedetto, $C^{1+\alpha}$ local regularity of weak solutions of degenerate
elliptic equations, \textit{Nonlinear Anal.} \textbf{7} (1983), no. 8, 827-850.

\bibitem{glow} R. Glowinski and J. Rappaz,  Approximation of a nonlinear elliptic problem arising in a non-Newtonian fluid flow model in glaciology, {\em ESAIM. Math. Model. Numer. Anal.} {\bf 37} (2003), no. 1, 175-186.

\bibitem{GV}  M. Guedda and L. Veron,  Quasilinear elliptic equations involving critical Sobolev exponents, {\em Nonlinear Anal.}  {\bf 13} (1989), no. 8, 879-902.

\bibitem{GPZ} Z. Guo, K. Perera, W. Zou, On critical p-Laplacian systems, {\em Adv. Nonlinear Stud.} {\bf 17} (2017), no. 4, 641-659.

\bibitem{HO} K. Ho, Y.H. Kim, P. Winkert, C. Zhang, The boundedness and Hölder continuity of weak solutions to elliptic equations involving variable exponents and critical growth, \textit{J. Differential Equations}  {\bf 313} (2022), 503-532.

\bibitem{lieberman1988} G.M. Lieberman, Boundary regularity for solutions of degenerate elliptic equations,
\textit{Nonlinear Anal.} \textbf{12} (1988), no. 11, 1203-1219.

\bibitem{MW} G. Marino and P. Winkert, Global a priori bounds for weak solutions of quasilinear elliptic systems with nonlinear boundary condition, \textit{J. Math. Anal. Appl.} \textbf{482} (2020), no. 2, 123555.

\bibitem{manamaw} R. Manasevich and J. Mawhin,  Periodic solutions for nonlinear systems with p-Laplacian-like operators, {\em J. Differential Equations} {\bf 145} (1998), no. 2, 367-393.

\bibitem{marcellini1} P. Marcellini,  On the definition and the lower semicontinuity of certain quasiconvex integrals, {\em Ann. Inst. H. Poincar\'{e} C Anal. Non Lin\'{e}aire} \textbf{3} (1986), no. 5, 391-409.

\bibitem{MP} A. J. Marinho and K. Perera,   Local and nonlocal critical growth anisotropic quasilinear elliptic systems, {\em Calc. Var. Partial Differential Equations} \textbf{64} (2025), no. 4, 1-26.

\bibitem{MINGIONE} G. Mingione, Bounds for the singular set of solutions to non linear elliptic systems. {\em Calc. Var. Partial Differential Equations} \textbf{18} (2003), no. 4, 373-400.

\bibitem{MMSV} L. Montoro, L. Muglia, B. Sciunzi, D.  Vuono,  Regularity and symmetry results for the vectorial p-Laplacian. {\em Nonlinear Anal.} {\bf 251 } (2025), 113700.

\bibitem{QS} P. Quittner and P. Souplet, A priori estimates and existence for elliptic systems via bootstrap in weighted Lebesgue spaces, \textit{Arch. Ration. Mech. Anal.} \textbf{174} (2004), no. 1, 49–81.

\bibitem{S} G. Stampacchia,  Le probl\`eme de Dirichelet pour les \'equations elliptiques du second ordre \`a coefficients discontinus, {\em Ann. Inst. Fourier} \textbf{15} (1965), no. 1, 189-257.

\bibitem{tolksdorf1983} P. Tolksdorf, On the Dirichlet problem for quasilinear equations in domains with conical boundary points,
\textit{Comm. Partial Differential Equations} \textbf{8} (1983), no. 7, 773-817.

\bibitem{tolksdorf1984} P. Tolksdorf, Regularity for a more general class of quasilinear elliptic
equations, \textit{J. Differential Equations} \textbf{51} (1984), no. 1, 126-150.

\bibitem{v}
G. Vannella,  Uniform $L^{\infty}$-estimates for quasilinear elliptic systems, {\em Mediterr. J. Math.} \textbf{20} (2023), no. 6, 1-11.

\end{thebibliography}
\end{document}